\newcommand{\bsx}{{\bf x}} 
\newcommand{\bsy}{{\bf y}}
\newtheorem{exmp}{Example}[section]
\theoremstyle{plain}
\newtheorem{theorem}{Theorem}
\theoremstyle{remark}
\newtheorem{remark}{Remark}
\title{Direct/iterative hybrid solver for\\ scattering by inhomogeneous
  media} \author{Oscar P. Bruno\footnote{Computing and Mathematical
    Sciences, Caltech, Pasadena, CA 91125, USA, obruno@caltech.edu} \and Ambuj Pandey\footnote{Indian Institute of Science Education and Research Bhopal (IISER Bhopal), ambuj@iiserb.ac.in}}
\date{}
\begin{document}
	\maketitle
\begin{abstract}
  This paper presents a fast high-order method for the solution of
  two-dimensional problems of scattering by penetrable inhomogeneous
  media, with application to high-frequency configurations containing
  (possibly) discontinuous refractivities.  The method relies on a
  hybrid direct/iterative combination of 1)~A differential volumetric
  formulation (which is based on the use of appropriate Chebyshev
  differentiation matrices enacting the Laplace operator) and, 2)~A
  second-kind boundary integral formulation (which, once again,
  utilizes Chebyshev discretization, but, in this case, in the
  boundary-integral context).  The approach enjoys low dispersion and
  high-order accuracy for smooth refractivities, as well as
  second-order accuracy (while maintaining low dispersion) in the
  discontinuous refractivity case.  The solution approach proceeds by
  application of Impedance-to-Impedance (ItI) maps to couple the
  volumetric and boundary discretizations. The volumetric linear
  algebra solutions are obtained by means of a multifrontal solver,
  and the coupling with the boundary integral formulation is achieved
  via an application of the iterative linear-algebra solver GMRES.  In
  particular, the existence and uniqueness theory presented in the
  present paper provides an affirmative answer to an open question
  concerning the existence of a uniquely solvable second-kind
  ItI-based formulation for the overall scattering problem under
  consideration.  Relying on a modestly-demanding scatterer-dependent
  precomputation stage (requiring in practice a computing cost of the
  order of $O(N^{\alpha})$ operations, with $\alpha \approx 1.07$, for
  an $N$-point discretization and for the relevant
    Chebyshev accuracy orders $q$ used), together with fast
  ($O(N)$-cost) single-core runs for each incident field considered,
  the proposed algorithm can effectively solve scattering problems for
  large and complex objects possibly containing discontinuities and
  strong refractivity contrasts.
\end{abstract}

\section{Introduction}
\label{sec:Introduction}
This paper considers the problem of evaluation of wave scattering by
penetrable inhomogeneous media in two dimensions. This is a problem of
fundamental importance in a wide range of applications, including
underwater acoustics, biological and medical imaging, seismology and
geophysics, etc. In all of these applications, it is highly desirable
to utilize efficient and accurate numerical methods which can deal
with arbitrary scattering geometries and (often discontinuous)
refractive index distributions, even in the high-frequency regime. As
is well
known~\cite{gillman2014spectrally,zepeda2016fast,ying2015sparsifying},
this problem presents a number of challenges, as it requires use of
large numbers of discretization points and, for iterative solvers,
increasingly large numbers of iterations as the frequencies and/or
refractive-index values increase. This paper presents a hybrid
iterative/direct linear algebra formulation for this problem, which,
like the approach~\cite{kirsch1994analysis,kirsch1990convergence},
combines a volumetric differential formulation in a bounded region,
and a surface boundary integral equation that provides the coupling to
the complementary unbounded exterior domain.  Unlike the previous
volumetric/boundary
formulation~\cite{kirsch1994analysis,kirsch1990convergence}, which
tackles the volumetric problem via a finite-element discretization,
further, the method proposed here utilizes
(i)~Polynomial approximation patches of accuracy of
  finite order $q$ (with, e.g., $q=10,20,40$); (ii)~A high-order
boundary integral formulation, as well as, both, (iii)~A multifrontal
direct linear solver (the Intel MKL implementation of the multifrontal
solver
Pardiso~\cite{schenk2004solving,bollhofer2019large,schenk2000efficient});
and (iv)~The iterative linear solver GMRES. Leveraging a new version
of the smoothing technique~\cite{hyde2005fast}, finally, the proposed
algorithm yields second-order convergence even for discontinuous
refractivities. As a result of these innovations, the proposed
algorithm can be quite effective: after a modestly-demanding
precomputation stage, requiring a computing cost that grows nearly
linearly with the number $N$ of degrees of freedom used
(Figure~\ref{fig:-time_precomputation} demonstrates a
  growth of the order of $\approx O(N^\alpha)$ with
  $\alpha = 1.07$), and at a cost per GMRES iteration that grows
  essentially linearly with $N$, the proposed method can evaluate,
with a favorable number of iterations, scattering by configurations
including large and complex objects as well as strong refractivity
contrasts and discontinuities---with high accuracy and in fast
single-core runs. A variety of numerical experiments
  have shown (cf. Figure~\ref{fig:-time_complexity} and its caption)
  that, as may be expected in view of the algebraic character of the
  precomputation and iteration stages, for each order $q$ and each
  discretization size $N$, the associated computing times are
  essentially constant asymptotically as the frequency $\kappa$
  grows.
\begin{figure}[h!]
  \centering
  \includegraphics[scale=.5]{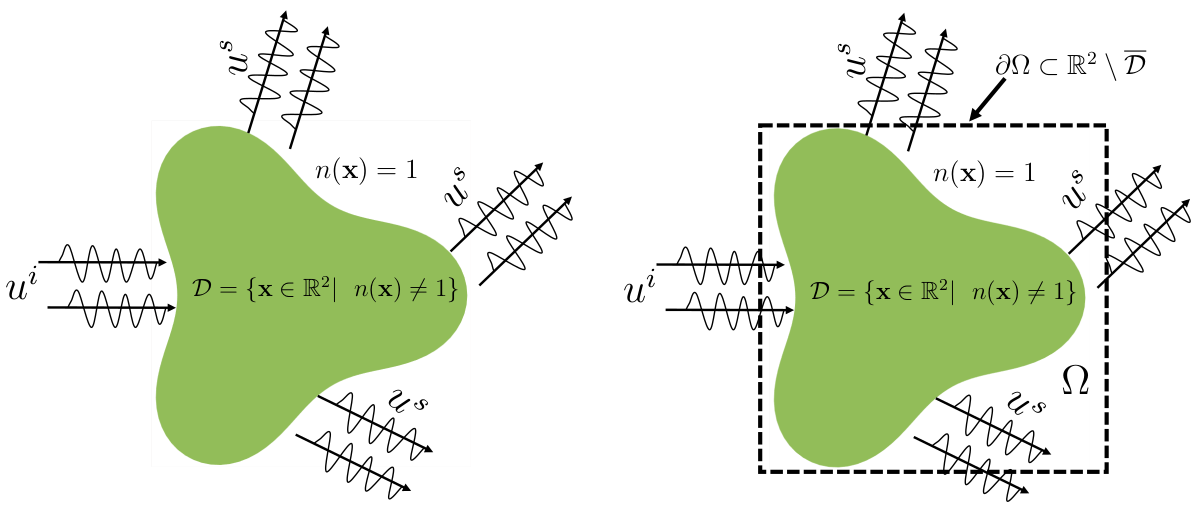}
  \caption{\small  \textbf{Left:}: Scattering by an
      inhomogeneous region
      $\mathcal{D}=\{\bsx \in \mathbb{R}^2 | n(\bsx)\neq 1 \}$.  An
      incident wave $u^{i}$ satisfying \eqref{FHE} impinges upon the
      inhomogeneity $\mathcal{D}$, and thereby the scattered field
      $u^{s}$ (satisfying \eqref{eq:-Sommerfeld})
      results. \textbf{Right:} The unique solution of the problem
      \eqref{FHE}-\eqref{eq:-Sommerfeld}, i.e., the total wave $u$,
      which is equal to the sum of incident wave $u^{i}$ and scattered
      wave $u^{s}$, is computed in a computational domain
      $\overline{\Omega }$ (the square region, enclosed by dotted
      black lines ) containing $\mathcal{D}$, by solving an equivalent
      formulation \eqref{IHE_1}-\eqref{IBIE} in $\overline{\Omega}$.
  }
	\label{scatfig}
\end{figure}

The problem we consider concerns scattering of an incident
time-harmonic acoustic wave $u^{i}$ by a bounded two-dimensional
inhomogeneity
$\mathcal{D}=\{\bsx:n(\bsx)\ne 1\}\subset \mathbb{R}^{2}$, where
$n(\bsx)$ denotes the (possibly discontinuous) index of refraction,
which is assumed to equal unity in the complement
$\mathbb{R}^{2}\setminus \overline {\mathcal{D}}$ of the closure
$\overline {\mathcal{D}}$ of the set $\mathcal{D}$,
as depicted on the left portion of
  Figure~\ref{scatfig}.  Throughout this paper it is assumed that
$u^{i}$ satisfies the free space Helmholtz equation
\begin{equation} \label{FHE} \Delta u^{i}(\bsx) + \kappa^2 u^{i}(\bsx)
  = 0, \ \ \bsx \in \mathbb{R}^2,
\end{equation}
where $\kappa $ is the wave number of the incoming wave $u^{i}$. The
total acoustic field $u$ (which equals the sum $u = u^{i}+u^s $ of the
incident and scattered fields) satisfies the equation
{\cite{colton2013inverse}}
\begin{equation} \label{HE} \Delta u(\bsx) + \kappa^2 n^2(\bsx)
  u(\bsx) = 0, \ \ \bsx \in \mathbb{R}^2,
\end{equation}
and the scattered field $u^s $ satisfies the Sommerfeld radiation
condition %\cite [p-16,p-66]{colton2013inverse}
\begin{equation} \label{eq:-Sommerfeld} \lim_{r \to \infty} \sqrt{r}
  \left( \frac{\partial u^s}{\partial r} - i \kappa u^s \right) = 0,
\end{equation}
where $r = (x_1^2+x_2^2)^{1/2}$ and $i = \sqrt{-1}$ is the imaginary
unit.

The simplest computational approaches to the problem
(\ref{HE})-(\ref{eq:-Sommerfeld}) proceed by replacing the unbounded
propagation region by a bounded computational domain containing the
scatterer $\mathcal{D}$ in its interior (which results in the
introduction of an artificial boundary), and then tackling the
resulting bounded problem by means of finite element or finite
difference discretizations. These approaches yield sparse linear
systems, and, in order to satisfy the radiation condition
(\ref{eq:-Sommerfeld}), they rely on the use of absorbing boundary
conditions. The classical absorbing-boundary
  techniques~\cite{givoli2004abc} and the more recent PML
  approaches~\cite{bermudez2007optimal,cimpeanu2015parameter,johnson2021notes}
  generally require, for accuracy, the use of a relatively large
  distance between scatterers and the absorbing boundary regions, and,
  thus, relatively large computational domains---leading to large
  number of unknowns and correspondingly large linear systems. In
  contrast, as illustrated in Example~4.4, the proposed approach can
  utilize computational boundaries that lie arbitrarily close to the
  scattering surfaces. Further, although square computational
  domains are considered in this paper for definiteness (as depicted
  on the right portion of Figure~\ref{scatfig} and described in detail
  in Section~\ref{prelim}), the proposed algorithm can be generalized
  in a straightforward manner to computational domains consisting of a
  union of disjoint square components covering the region
  $\{n(x)\ne 1\}$ occupied by the scatterer---thus leading, upon use
  of sufficiently small square components, to computational domains
  tightly covering the scattering regions where the refractivity is
  different from the free-space refractivity.  Other
  absorbing-boundary approaches~\cite{hagstrom2004new} allow for the
  use of computational boundaries that lie near the scattering
  boundaries---at the expense of a degree of algorithmic
  complexity. Additionally, the frequently used low-order
  finite-difference (FDM) and finite-element methods (FEM) for the
  problem (\ref{HE}) generally suffer from significant dispersion
  errors \cite{bayliss1985accuracy}, also known as pollution
  error\cite{babuska1997pollution} (a problem which can be alleviated
  or even eliminated~\cite{melenk2011wavenumber} by employing
  high-order finite elements), and they lead to linear systems which
  require large numbers of iterations if treated by means of iterative
  linear algebra solvers~\cite{ernst2012difficult}.

An alternative widely-used computational approach for the problem
(\ref{HE})-(\ref{eq:-Sommerfeld}) relies on the equivalent
\textit{Lippmann-Schwinger} volumetric integral
equation~\cite{colton2013inverse,hyde2005fast}
\begin{equation} \label{eq:-Lippmann} u(\bsx) + \kappa^2 \int
  \limits_{\mathcal{D}}G_{\kappa}(\bsx - \bsy) u(\bsy) m(\bsy)d\bsy =
  u^{i}(\bsx), \hspace{3mm} \hspace{3mm} \bsx \in \mathbb{R}^{2},
\end{equation}
where $ G_{\kappa}(\bsx) = \frac{i}{4}H^{1}_{0}(\kappa|\bsx|) $
denotes the radiating fundamental solution of Helmholtz equation in
free space and $m(\bsx) = 1-n^{2}(\bsx)$ is the contrast
function. This formulation offers several advantages; notably this
approach only requires discretization of the scattering region
$\mathcal{D}$, and the solutions thus obtained automatically satisfy
the Sommerfeld radiation condition~(\ref{eq:-Sommerfeld}).
Additionally, equation (\ref{eq:-Lippmann}) is a Fredholm equation of
the second kind, and, therefore, upon discretization, the condition
number of the resulting linear system remains essentially constant as
the discretization is refined. Unfortunately, however, scattering
solvers based on volumetric integral equation formulations give rise
to certain difficulties, since 1)~The resulting discrete linear
systems, which are dense and non-Hermitian, cannot be effectively
solved by means of classical direct linear-algebra techniques except
for problems that are acoustically small; and 2)~The use of iterative
linear-algebra solvers for such volumetric formulations requires very
large number of iterations for convergence whenever the frequency or
the contrast function $m(\bsx)$ (or both) are large. In recent years,
a number of algorithms, including direct and iterative solvers, have
been proposed for the solution of Lippmann-Schwinger equation, for
instance, see \cite{ ambikasaran2016fast,bruno2004efficient,
  duan2009high, andersson2005fast, vainikko2000fast,bruno2005higher,
  pandey2019improved} and references therein.  The simplest fast
algorithms in this context, which rely on the use of equidistant grids
and FFTs, only provide first order convergence in presence of a
discontinuous index of refraction. For instance, the scheme introduced
in \cite{duan2009high} provides a fast high-order FFT-based method for
smooth refractivities, but it does not yield higher-order accuracy in
presence of discontinuous refractive indices, and it requires large
iteration numbers for high-frequencies or high refractivity contrast.
The algorithm introduced in \cite{bruno2004efficient} exhibits second
order convergence in the presence of discontinuous refractivity, but
this approach does not address problem~2 above: the algorithm requires
large iteration numbers for large frequencies. The recent fast
algorithms~\cite{pandey2019improved,anand2016efficient} provide
convergence-order higher than two via special treatment at
discontinuity boundaries, but they also suffer from large iteration
numbers at high frequency.  Recently preconditioning techniques were
introduced
in~\cite{ying2015sparsifying,liu2018sparsify,zepeda2016fast}, which
were shown to reduce iteration numbers, even at high frequency. No
reports have been provided in either theoretical, graphical or tabular
form, on the numerical accuracy of the solutions provided by these
methods. Further, the effectiveness of these methodologies is highly
dependent on the smoothness of the refractive-index function.  For
example, reference~\cite[Sec. 2.5]{liu2018sparsify} indicates that
``if the [velocity] field has...  discontinuities neither will the
Nystr\"om method be able to give an accurate discretization scheme nor
can the sweeping factorization provide... an accurate approximating
solution. Thus, for our preconditioner to work, we require certain
smoothness from the velocity fields''. 
	
Methods which, like the one proposed in this paper, are based on a
combination of a volumetric differential formulation coupled with a
boundary integral equation for {\em physically-exact} truncation of
the computation domain, have been proposed previously. The first such
contributions were provided
in~\cite{kirsch1994analysis,kirsch1990convergence}, and extensions to
multi-domain iterative solvers in the context of finite-element
discretizations can be found
in~\cite{caudron2020optimized,boubendir2008coupling,bendali2007feti,benamou1997domain}. In
these contexts, the interior volumetric PDE is
generally discretized by means of FEM of low order of
  accuracy, while boundary-element or Nystr\"{o}m discretizations are
  used in the discretization of the boundary integral equation.  As
  mentioned above, the use of low-order FEM methods leads to accuracy
  degradation as the domain sizes grow, in view of the well known
  dispersion errors~\cite{bayliss1985accuracy,babuska1997pollution}
which requires increases in the number of points per wavelength in
order to maintain fixed accuracy as the wavenumber $\kappa$ grows.
High-order methods greatly reduce dispersion and pollution errors, and
they remain advantageous even in presence of discontinuous PDE
coefficients
(Tables~\ref{Table:scat_by_large_smooth_scat},
  \ref{Table:Scat_By_Large_Disc}
  and~\ref{Table:Scat_By_Large_Contrast}). Indeed, the improved
second-order accurate spectral discretization we introduce for
discontinuous-coefficient problems enjoys essentially dispersionless
performance---an important feature that is not obtained
from commonly used low-order finite-difference or
  finite-element methods.

A {\em direct solver} based on spectral discretizations
of fixed order of accuracy, with computational
complexity of order $O(N^{3/2})$, was introduced
in~\cite{gillman2014spectrally}. The method achieves its operation
count by decomposing the domain in a number of spectral square patches
that are organized in a tree structure, with a subsequent aggregation
process, whereby certain ``Impedance-to-Impedance'' (ItI) maps for
individual cells are recursively merged into ItI maps for larger and
larger rectangular groups of cells. Ultimately, when the computational
domain boundary is reached a boundary integral equation is used in
conjunction with the Dirichlet-to-Neumann map (DtN) of the complete
domain to enact the interactions between the bounded scatterer and the
exterior domain. This algorithm can effectively treat high-frequency
problems for which the refractivity is smooth; the illustrations
available in the literature do not include applications for which
refractivity discontinuities exist, but it is expected that the
first-order accuracy would ensue in such cases.

The approach proposed in this paper is a fast hybrid direct/iterative
method which is demonstrated to run at a cost of $O(N^\alpha)$
operations with $\alpha \approx 1.07$, and which, as
  illustrated in Section~\ref{sec:numericalResults}, enjoys a number
  of additional appealing features: the algorithm 1)~Requires a small,
  essentially fixed, numbers of iterations as the refractive index
  (and, thus, the interior wavelength) is increased while keeping the
  exterior wavelength fixed (Table~\ref{Table:Scat_By_Large_Contrast}
  below); 2)~Requires significantly milder increases in iteration
  numbers (see Tables~\ref{Table:scat_by_large_smooth_scat}
  and~\ref{Table:Scat_By_Large_Disc} and Remark~\ref{it_num_rem}
  below) than other iterative
  solvers~\cite{bruno2004efficient,laird2002preconditioned} as the
  exterior frequency increases, in view of its resolution of all
  interior multiple scattering events via a direct solver; 3)~Exhibits
  very low dispersion; and, 4)~Converges with high-order accuracy for
  smooth refractivities, and with second-order accuracy (maintaining
  low dispersion) for discontinuous refractivities, as discretizations
  are refined.  This solver relies on a general-purpose sparse direct
solution technique for the volumetric interior problem that, in
particular, enforces the PDE at spectral cell boundaries by matching
``transmission values'' (that is, the values of the solution and its
normal derivative) at such boundaries; and it incorporates a
second-kind integral formulation in conjunction with an ItI map at the
computational domain boundary (instead of the possibly singular DtN
map used in~\cite{gillman2014spectrally}). The algorithm is completed
by means of the iterative linear-algebra solver GMRES.  In particular,
the existence and uniqueness theory presented in the present paper
provides an affirmative answer to an open-question put forth
in~\cite[Sec. 6]{gillman2014spectrally}, concerning the existence of a
uniquely-solvable second-kind formulation---which involves only ItI
maps, and no DtN maps.
	
As indicated in Section~\ref{sec:numericalAlgorithm}, the proposed
hybrid direct/iterative strategy provides significant advantages over
non-hybrid strategies in which either a fully iterative linear algebra
solver is used, or a generic direct fast sparse solver such
as~\cite{davis2004algorithm} is utilized. Indeed, a fully iterative
solver would necessarily require large numbers of iterations in order
to account for the multiple scattering events that take place at
boundaries of discontinuity of the refractive-index function $n$. As
demonstrated in Section~\ref{sec:numericalResults} (example
\ref{SEHA}), on the other hand, the coupling to the boundary integral
solver destroys the sparsity inherent in the interior spectral matrix,
and can thereby significantly hinder an overall direct solver
strategy.  The proposed hybrid strategy achieves the dual goal of
maintaining a reduced iteration count (since the boundary integral
equation, which requires reduced iteration numbers, is the only
equation that is solved iteratively) while maintaining sparsity.
	
The overall proposed formulation can be used in conjunction with any
adequate direct sparse linear algebra solver for the volumetric
portion of the algorithm. If the specialized Helmholtz direct
linear-algebra solver proposed in~\cite{gillman2014spectrally} were
thus used, the resulting approach would accomplish three goals
mentioned in that reference, namely 1)~Use of an exterior solver based
on the ItI (instead of the Dirichlet-to-Neumann map); 2)~Employment of
an overall formulation that is invertible for all frequencies; and
3)~Use of an iterative strategy for the solution of the integral
equation portion of the method. As indicated above, in this paper we
utilize the Intel MKL implementation of the multifrontal solver
Pardiso~\cite{schenk2004solving,bollhofer2019large,schenk2000efficient},
which has shown to provide excellent performance, at nearly linear
computing cost, to tackle the volumetric portion of the problem.  In
all, the proposed approach provides fast and essentially
dispersionless solutions for high-frequency and/or high-contrast
problems, with high-order accuracy for smooth refractivities, and it
maintains second order accuracy for discontinuous refractive indexes
$n$.
	
This paper is organized as follows.
	% In section (\ref{sec:Preliminaries}) we present mathematical
	% formulation of the problem and a succinct overview of existing
	% numerical method for this problem.
The proposed second-kind integro-differential formulation and the
associated solution-uniqueness proof are presented in
Section~\ref{prelim}. Section \ref{sec:numericalAlgorithm} then
presents a detailed description of the proposed algorithm, and Section
\ref{sec:numericalResults} provides a variety of numerical results
demonstrating the character of the proposed methodology. Concluding
remarks, finally, are presented in Section~\ref{conclusion}.

\section{Uniquely-solvable, second-kind integro-differential hybrid
  formulation} \label{prelim} As discussed in the previous section,
the proposed numerical method is based on a reformulation of the
problem~\eqref{FHE}-\eqref{eq:-Sommerfeld} as a combination of a
differential equation formulation in a volumetric region and a
boundary integral equation formulation on the boundary of
the computational domain.  To describe the method we
consider an open bounded ``computational'' domain
$\Omega\subset \mathbb{R}^{2}$ containing the inhomogeneity:
$\overline{\mathcal{D}}\subset \Omega$. As mentioned in
Section~\ref{sec:Introduction} and depicted on the right portion of
Figure~\ref{scatfig}, throughout this paper the domain $\Omega$ is
taken to equal a square for simplicity, but the algorithm can easily
be generalized to allow for computational domains consisting of a
union of disjoint square components tightly covering the region
$\{n(x)\ne 1\}$. Then the complete
problem~\eqref{FHE}-\eqref{eq:-Sommerfeld} is reformulated in terms of
two main elements: 1)~A Helmholtz equation with variable coefficients
in the volumetric region $\Omega$, and; 2)~A boundary integral
equation on $\partial \Omega$ which couples the solution within
$\Omega$ to the solution in the unbounded domain
$\mathbb{R}^{2}\setminus \overline{\Omega}$. In order to proceed with
this plan the following section first discusses a certain
impedance-to-impedance
operators~\cite{gillman2014spectrally,kirsch1994analysis,kirsch1990convergence}
associated with the Helmholtz problems in the interior and exterior of
$\Omega$.
	
\subsection{Interior and Exterior Impedance-to-Impedance
  operators\label{ItI}}
Let $\Omega\subset\mathbb{R}^2$ denote a bounded open domain with a
Lipschitz boundary $\partial\Omega$. Then, for each non-vanishing real
constant $\beta$, the ``exterior'' impedance operator
$T_{\mathrm{ext}}: H^{-\frac 12}(\partial \Omega)\to H^{-\frac
  12}(\partial \Omega)$ is defined by
\begin{equation}\label{T_ext}
  T_{\mathrm{ext}}[\psi](\bsx) =	u_{\mathrm{ext}}(\bsx)- i\beta \frac{\partial u_{ext}}{\partial \bm{\nu}}(\bsx),
\end{equation}
where $\bm{\nu}$ is the unit outward normal vector at
$\partial \Omega$ and where
$u_\mathrm{ext}\in H^1_\mathrm{loc}\left(\mathbb{R}^2\setminus\Omega
\right)$ is the unique radiating solution of the exterior problem:
\begin{equation}\label{IHEext}
  \begin{cases}
    \Delta u_\mathrm{ext}(\bsx) + \kappa^2 u_\mathrm{ext}(\bsx) = 0, \ \  \mathrm{if} \ \bsx \in \mathbb{R}^2\setminus \overline{ \Omega}, \\%\label{IBCext}
    u_\mathrm{ext}(\bsx)+ i\beta \frac{\partial u_\mathrm{ext}}{\partial \bm{\nu}}(\bsx)=\psi(\bsx), \ \ \mathrm{if} \ \bsx \in\partial \Omega;
  \end{cases}
\end{equation}
see~\cite[Theorem 2.3]{cakoni2001direct} and~\cite[Theorem
6.11]{mclean2000strongly} (cf. ~\cite[Theorem 4.12]{kirsch1992remarks}
and~\cite[Sec. 3.2]{kirsch1994analysis} where corresponding results
for smooth boundaries are provided). The definition of the
``interior'' impedance operator
$T_{\mathrm{int}}: H^{-\frac 12}(\partial \Omega)\to H^{-\frac
  12}(\partial \Omega)$ is analogous:
\begin{equation}\label{int_ItI}
  T_{\mathrm{int}}[\phi](\bsx) =	u_{\mathrm{int}}(\bsx)- i\beta \frac{\partial u_{\mathrm{int}}}{\partial \bm{\nu}}(\bsx),
\end{equation}
where $u_{\mathrm{int}}\in H^1(\Omega)$ is the unique
solution of the problem
\begin{equation}\label{IHE}
  \begin{cases}
    \Delta u_{\mathrm{int}}(\bsx) + \kappa^2 n^2(\bsx) u_{\mathrm{int}}(\bsx) = 0, \ \  \mathrm{for} \ \bsx \in \Omega, \\%\label{IBC}
    u_{\mathrm{int}}(\bsx)+ i\beta \frac{\partial u_{\mathrm{int}}}{\partial \bm{\nu}}(\bsx)=\phi(\bsx), \ \ \mathrm{for} \ \bsx \in\partial \Omega.
  \end{cases}
\end{equation}
	
\subsection{Hybrid formulation}
As is known~\cite[Theorem 2.5]{colton2013inverse}, any radiating
solution $u_\mathrm{ext}$ of the Helmholtz equation over the exterior
domain $\mathbb{R}^{2}\setminus \overline{\Omega}$ may be represented
by means of Green's formula
\begin{equation}\label{GFSF}%exterior representation formula
  u_\mathrm{ext}(\bsx) =\int_{\partial \Omega} \left( \frac{\partial G_{\kappa}(\bm{\bsx-\bsy})}{\partial \bm{\nu(\bsy)}}u_\mathrm{ext}(\bsy) -G_{\kappa}(\bsx-\bsy) \frac{\partial u_\mathrm{ext}}{\partial \bm{\nu}}(\bsy)
  \right) ds(\bsy), \quad \bsx \in \mathbb{R}^{2}\setminus \overline{\Omega}
\end{equation}
which, utilizing the jump relations~\cite[Theorem
3.1]{colton2013inverse} of the single- and double-layer potentials on
$\partial \Omega$ yields the relation
\begin{equation}\label{GFSF_Extended}%exterior representation formula
  u_\mathrm{ext}(\bsx)=\frac{u_\mathrm{ext}(\bsx)}{2}+\int_{\partial \Omega} \left( \frac{\partial G_{\kappa}(\bm{\bsx-\bsy})}{\partial \bm{\nu(\bsy)}}u_\mathrm{ext}(\bsy) -G_{\kappa}(\bsx-\bsy) \frac{\partial u_\mathrm{ext}}{\partial \bm{\nu}}(\bsy)
  \right) ds(\bsy), \quad\mbox{for}\quad \bsx \in \partial \Omega.
\end{equation}  
Similarly, an incident field $u^{i}$ (a function that satisfies
equation~\eqref{FHE} throughout $\mathbb{R}^2$) satisfies
	\begin{equation}\label{GFIF_Extended}%exterior representation
0=\frac{u^{i}(\bsx)}{2}+\int_{\partial \Omega} \left(
\frac{\partial G_{\kappa}(\bm{\bsx-\bsy})}{\partial
\bm{\nu(\bsy)}}u^{i}(\bsy) -G_{\kappa}(\bsx-\bsy) \frac{\partial
u^{i}}{\partial \bm{\nu}}(\bsy) \right) ds(\bsy), \quad\mbox{for}\quad
\bsx \in \partial \Omega.
	\end{equation} In the case $u_\mathrm{ext}$ equals the
scattered field $u^s$ resulting from the incident field $u^i$, we may
combine equations~\eqref{GFSF_Extended} and~\eqref{GFIF_Extended} and
obtain the corresponding relation
\begin{equation}\label{GFTF_Extended}%exterior representation
\frac{u(\bsx)}{2}-\int_{\partial \Omega} \left( \frac{\partial
G_{\kappa}(\bm{\bsx-\bsy})}{\partial \bm{\nu(\bsy)}}u(\bsy)
-G_{\kappa}(\bsx-\bsy) \frac{\partial u}{\partial \bm{\nu}}(\bsy)
\right) ds(\bsy)=u^{i}(\bsx), \quad\mbox{for}\quad \bsx \in \partial
\Omega,
\end{equation} for the total field $u = u^i+u^s$.  Clearly,
defining, for $\bsx\in\partial \Omega$, $\phi(\bsx)=\left( u(\bsx)+
  i\beta \frac{\partial u}{\partial \bm{\nu}}(\bsx)\right) $ and
	\begin{equation}\label{ERF}%exterior representation formula
		\mathcal{A}^\mathrm{int}_\mathrm{ext}[\phi](\bsx) =\int_{\partial \Omega} \left(	\frac{1}{2} \frac{\partial G_{\kappa}(\bm{\bsx-\bsy})}{\partial \bm{\nu(\bsy)}}\left( I + T_{\mathrm{int}}\right)[\phi](\bsy) -\frac{1}{2i\beta}G_{\kappa}(\bsx-\bsy) \left(I - T_{\mathrm{int}}\right)[\phi](\bsy)
		\right) ds(\bsy),
	\end{equation}
	equation~\eqref{GFTF_Extended} may be re-expressed in the form
\begin{equation}\label{GF_re-exp}
		\frac{1}{4}\left(I + T_{\mathrm{int}}\right)[\phi](\bsx)
		-	\mathcal{A}_{\mathrm{ext}}^{\mathrm{int}} [\phi](\bsx)=u^{i}(\bsx).
\end{equation}
	
In particular it is easy to check that, given a solution $u$
of~\eqref{HE}-\eqref{eq:-Sommerfeld} and defining
$\phi = u+ i\beta \frac{\partial u}{\partial \bm{\nu}} $ for
$\bsx \in \partial \Omega$, the pair of functions $(u,\phi)$ is a
solution of the problem
\begin{align}\label{IHE_1}
  &\Delta u(\bsx) + \kappa^2 n^2(\bsx) u(\bsx) = 0, \ \  \mathrm{if} \ \bsx \in \Omega, \\\label{IHE_2}
  &\phi(\bsx) - \left(
    u(\bsx)+ i\beta \frac{\partial u}{\partial \bm{\nu}}(\bsx)\right) = 0, \ \ \mathrm{if} \ \bsx \in\partial \Omega,\\\label{IBIE}
  &\frac{1}{4}\left(I + T_{\mathrm{int}}\right)[\phi](\bsx)
    -	\mathcal{A}_{\mathrm{ext}}^{\mathrm{int}} [\phi](\bsx)=u^{i}(\bsx)\ \ \mathrm{for} \ \ \bsx \in  \partial \Omega.
\end{align}
	%Once the solution $u$ is known in $\overline \Omega$ then solution at any $\bsx \in \mathbb{R}^{2}\setminus \overline{\Omega}$ can be obtained by evaluating integrals in~\eqref{GFSF}. 
As shown in the following section, equation~\eqref{IBIE} (and, thus,
the full problem~\eqref{IHE_1}--\eqref{IBIE}) is uniquely
solvable---and the solution $u$ must therefore coincide with the
restriction to $\overline{\Omega}$ of the solution of the original
inhomogeneous scattering
problem~\eqref{FHE}--\eqref{eq:-Sommerfeld}. Once the solution $u$ of
\eqref{IHE_1}-\eqref{IBIE} is obtained for
$\bsx \in \overline{\Omega}$, the scattered field $u^{s}$ (and hence
the total field $u=u^{i}+u^{s}$) at any point
$\bsx \in \mathrm{R}^{2}\setminus \overline{\Omega}$ can be easily
obtained by utilizing the representation formula~\eqref{GFSF} with
$u_\mathrm{ext} = u^{s}$. In other words, the hybrid
integro-differential problem~\eqref{IHE_1}--\eqref{IBIE} is equivalent
to the original inhomogeneous scattering
problem~\eqref{HE}-\eqref{eq:-Sommerfeld}, as claimed.

\begin{remark} \label{smth-dens} The density function
    $\phi$, which, per Theorem~\ref{uniqueness} below, is the unique
    solution of equation~\eqref{IBIE}, might in principle be expected
    to exhibit some sort of singularity at the corners of the square
    $\partial\Omega$; see
    e.g.~\cite{grisvard2011elliptic,zargaryan1984asymptotic}.
    However, in view of~\eqref{IHE_2}, the solution $\phi$ under
    consideration is actually an infinitely differentiable (and,
    indeed, analytic) function along each one of the sides of the
    square $\partial\Omega$. This follows from the relation
    $\phi = u+ i\beta \frac{\partial u}{\partial \bm{\nu}} $ and the
    fact that the solution $u$ is infinitely smooth (and, in fact,
    analytic) in a certain neighborhood of $\partial\Omega$ within
    which the refractive index $n$ is constantly equal to one.
\end{remark}

	%&u(\bsx) = \mathcal{A}_{\mathrm{text}}^{\mathrm{int}}[\phi](\bsx)\ \ \text{for $\bsx$ outside $\Omega$.}\label{IHE_3}
	
\subsection{Uniqueness }
\begin{theorem} [Uniqueness of solution for the second-kind hybrid
  volume-boundary formulation]\label{uniqueness}
  Let $\phi\in H^{-\frac 12}(\partial\Omega)$ denote a solution of
  equation~\eqref{IBIE} with $u^i=0$. Then $\phi=0$.
\end{theorem}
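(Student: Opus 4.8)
The plan is to reduce the homogeneous hybrid problem to the classical uniqueness statement for the exterior-radiating inhomogeneous-medium scattering problem, for which Rellich's lemma and unique continuation apply. First I would recover the Cauchy data of $u$ on $\partial\Omega$ directly from $\phi$ and the interior ItI map. Since $u$ solves~\eqref{IHE_1} with incoming impedance trace $\phi=\alpha u+i\kappa\beta\,\partial u/\partial\bm{\nu}$, the definition of $T_{\mathrm{int}}$ yields $T_{\mathrm{int}}[\phi]=\alpha u-i\kappa\beta\,\partial u/\partial\bm{\nu}$ on $\partial\Omega$, so that $(I+T_{\mathrm{int}})[\phi]=2\alpha\,u$ and $(I-T_{\mathrm{int}})[\phi]=2i\kappa\beta\,\partial u/\partial\bm{\nu}$. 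This expresses both the Dirichlet and the Neumann trace of $u$ as explicit functions of $\phi$.

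Next I would introduce the exterior field $v:=\mathcal{A}_{\mathrm{ext}}^{\mathrm{int}}[\phi]$ defined by~\eqref{ERF} for $\bsx\in\RR^2\setminus\overline{\Omega}$. Substituting the two trace identities into~\eqref{ERF} shows that $v$ coincides with the Green representation $\int_{\partial\Omega}(\partial_{\bm\nu} G_\kappa\, u-G_\kappa\,\partial_{\bm\nu} u)\,ds$ built from the traces of $u$; hence $v$ is a radiating solution of $\Delta v+\kappa^2 v=0$ in the exterior, where $n\equiv1$. Passing to the boundary and invoking the jump relations of the single- and double-layer potentials, the homogeneous equation~\eqref{IBIE} (with $u^i=0$) collapses exactly to the statement that the \emph{interior} boundary trace of this layer potential vanishes, which is the continuity of the Dirichlet data of the pair $(u,v)$ across $\partial\Omega$.

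I would then show that $u$ and $v$ in fact share full Cauchy data on $\partial\Omega$, so that the glued field $U:=u$ in $\Omega$ and $U:=v$ in $\RR^2\setminus\overline{\Omega}$ belongs to $H^2_{\mathrm{loc}}(\RR^2)$, satisfies $\Delta U+\kappa^2 n^2 U=0$ throughout $\RR^2$, and radiates. Classical uniqueness for the inhomogeneous scattering problem---Rellich's lemma together with unique continuation for the piecewise-continuous, real, positive refractive index $n$---then forces $U\equiv 0$; in particular $u\equiv0$, and therefore $\phi=\alpha u+i\kappa\beta\,\partial u/\partial\bm{\nu}=0$, giving $(u,\phi)=(0,0)$.

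The delicate part is the matching step: the single scalar equation~\eqref{IBIE} delivers only continuity of the Dirichlet trace, and I must upgrade this to continuity of the Neumann trace, thereby excluding the spurious interior resonances that afflict DtN-based couplings. This is precisely where the impedance (ItI) structure, rather than a DtN map, is essential. I would exploit it through a flux balance: pairing $u$ with $\overline{u}$ and taking imaginary parts gives $\mathrm{Im}\int_{\partial\Omega}\overline{u}\,\partial_{\bm\nu} u\,ds=0$ since $n$ is real, whereas the Rellich identity renders the corresponding exterior quantity $\mathrm{Im}\int_{\partial\Omega}\overline{v}\,\partial_{\bm\nu} v\,ds$ sign-definite and proportional to the radiated far-field energy. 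Combining the interior and exterior identities (with $\alpha,\beta$ of suitable sign) should force that energy to vanish, hence $v\equiv0$ in the exterior, closing the Neumann matching. I expect this impedance/flux balance, and not the reduction to the scattering problem itself, to be the main obstacle of the argument.
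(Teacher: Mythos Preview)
Your overall strategy---glue $u$ to an exterior radiating field and invoke classical uniqueness for the inhomogeneous scattering problem---matches the paper's, and your opening moves (recovering the Cauchy data of $u$ from $\phi$ via $T_{\mathrm{int}}$, and recognising $\mathcal{A}_{\mathrm{ext}}^{\mathrm{int}}[\phi]$ as the layer potential built on that data) are correct. The gap is precisely where you place it, and your proposed fix does not close it. Once you substitute $(I\pm T_{\mathrm{int}})[\phi]=2\alpha u,\;2i\kappa\beta\,\partial_{\bm\nu}u$, the parameters $\alpha,\beta$ cancel from~\eqref{IBIE} entirely: the equation reduces to $\tfrac12 u=K[u]-S[\partial_{\bm\nu}u]$, which is independent of the impedance coupling. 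So nothing in your argument actually distinguishes the ItI formulation from a DtN one, contrary to your expectation. Concretely, your field $v$ does pick up the Dirichlet trace $u$ from the exterior, but the \emph{interior} extension $w$ of the same layer potential solves the constant-coefficient Helmholtz equation with $w|_{\partial\Omega}=0$; at an interior Dirichlet eigenfrequency you cannot conclude $w\equiv 0$, and the jump relations leave $\partial_{\bm\nu}v-\partial_{\bm\nu}u=\partial_{\bm\nu}w$ undetermined. Your flux identities give only $\mathrm{Im}\int_{\partial\Omega}\bar u\,\partial_{\bm\nu}u=0$ and a sign on $\mathrm{Im}\int_{\partial\Omega}\bar v\,\partial_{\bm\nu}v$; with $\partial_{\bm\nu}v\neq\partial_{\bm\nu}u$ these cannot be combined, and no choice of signs of $\alpha,\beta$ re-enters because those constants have already disappeared.

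The paper circumvents this by a device that genuinely uses the impedance structure. Instead of comparing $u$ with the layer potential, it introduces the radiating solution $u_{\mathrm{ext}}$ of the \emph{exterior impedance problem} with the same boundary datum $\phi$ (well-posed for all $\kappa$), and then rearranges~\eqref{IBIE} so that $T_{\mathrm{int}}[\phi]$ appears as the unknown of a combined-field integral equation with coupling parameter $\eta=\alpha/(\kappa\beta)$. Green's formula for $u_{\mathrm{ext}}$ shows that $T_{\mathrm{ext}}[\phi]$ satisfies the identical CFIE with the same right-hand side. Unique solvability of the CFIE for all $\kappa$ then gives $T_{\mathrm{int}}[\phi]=T_{\mathrm{ext}}[\phi]$, and together with the shared incoming impedance $\phi$ this yields both $u=u_{\mathrm{ext}}$ and $\partial_{\bm\nu}u=\partial_{\bm\nu}u_{\mathrm{ext}}$ on $\partial\Omega$ in one stroke---no separate Neumann-matching step is needed. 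The gluing and the appeal to classical uniqueness then proceed exactly as you outline.
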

\begin{proof}
  Letting
  $u_\mathrm{ext}\in H^1_\mathrm{loc}\left(\mathbb{R}^2\setminus\Omega
  \right)$ denote the radiating solution of~\eqref{IHEext}
  corresponding to the impedance data $\psi=\phi$ on
  $\partial \Omega$, the Green relation~\eqref{GFSF_Extended}, with
  integral expressions interpreted as
  in~\cite[Thm. 4.4]{mclean2000strongly}, may be re-expressed in the
  form
  \begin{equation} \label{EGRF} \frac{1}{4}\left(I +
      T_{\mathrm{ext}}\right)[\phi](\bsx) -
    \mathcal{A}_{\mathrm{ext}}^{\mathrm{ext}} [\phi](\bsx)=0 \ \
    \text{for} \ \ \bsx \in \partial \Omega,
  \end{equation} 
  where
  \begin{equation}\label{EA}
    \mathcal{A}_{\mathrm{ext}}^{\mathrm{ext}}[\phi](\bsx) =\int_{\partial \Omega} \left(	\frac{1}{2} \frac{\partial G_{\kappa}(\bm{\bsx-\bsy})}{\partial \bm{\nu(\bsy)}}\left( I + T_{\mathrm{ext}}\right)[\phi](\bsy) -\frac{1}{2i\beta}G_{\kappa}(\bsx-\bsy) \left(I - T_{\mathrm{ext}}\right)[\phi](\bsy)
    \right) ds(\bsy).
  \end{equation}
  Equation~\eqref{IBIE} with $u^{i}=0$ and~\eqref{EGRF} can be recast
  in the forms
  \begin{align} \label{ICFE} \frac{T_{\mathrm{int}}[\phi ](\bsx)}{2}
    -\int_{\partial \Omega} \left ( \frac{\partial
        G_{\kappa}(\bm{\bsx-\bsy}) }{\partial \bm{\nu(\bsy)}} -i\eta
      G_{\kappa}(\bsx-\bsy)\right)T_{\mathrm{int}}[\phi ]
    (\bsy)ds(\bsy)&=f(\bsx), \ \ \bsx \in \partial
    \Omega,\\ \label{ECFE} \frac{T_{\mathrm{ext}}[\phi ](\bsx)}{2}
    -\int_{\partial \Omega} \left (\frac{\partial
        G_{\kappa}(\bm{\bsx-\bsy}) }{\partial \bm{\nu(\bsy)}} -i\eta
      G_{\kappa}(\bsx-\bsy)\right)T_{\mathrm{ext}}[\phi]
    (\bsy)ds(\bsy)&=f(\bsx), \ \ \bsx \in \partial \Omega,
  \end{align}
  where
  \begin{equation} \label{CFRHS} f(\bsx)=-\frac{\phi(\bsx)}{2}
    +\int_{\partial \Omega} \left (\frac{\partial
        G_{\kappa}(\bm{\bsx-\bsy}) }{\partial \bm{\nu(\bsy)}}+i\eta
      G_{\kappa}(\bsx-\bsy)\right)\phi (\bsy)ds(\bsy),
  \end{equation}
  and where $\eta = 1/ \beta$.  Clearly, equations~\eqref{ICFE}
  and~\eqref{ECFE} are identical combined field integral equation of
  second kind, with the same right hand side, for the unknowns
  $T_{\mathrm{ext}}[\phi]$ and $T_{\mathrm{int}}[\phi]$,
  respectively. Since, as is well
  known~\cite[p. 51]{colton2013inverse}, the combined field integral
  equation admits unique solutions, it follows that
  $T_{\mathrm{int}}[\phi] =T_{\mathrm{ext}}[\phi]$ or, equivalently,
  \begin{equation}\label{int-ext}
    u_{\mathrm{int}}(\bsx)- i\beta \frac{\partial
      u_{\mathrm{int}}}{\partial \bm{\nu}}(\bsx) = u_{\mathrm{ext}}(\bsx)-
    i\beta \frac{\partial u_{\mathrm{ext}}}{\partial
      \bm{\nu}}(\bsx) \ \ \text{on} \ \ \partial \Omega,
  \end{equation}  
  where $u_{\mathrm{int}}$ is the solution of~\eqref{IHE}.  But,
  from~\eqref{IHE} and~\eqref{IHEext} we have
  \begin{equation}\label{int-ext_2}
    u_{\mathrm{int}}(\bsx)+i\beta \frac{\partial
      u_{\mathrm{int}}}{\partial \bm{\nu}}(\bsx) =\phi(\bsx) =\psi(\bsx) =
    u_{\mathrm{ext}}(\bsx)+ i\beta \frac{\partial
      u_{\mathrm{ext}}}{\partial \bm{\nu}}(\bsx)\quad \mbox{ on
      $\partial \Omega$},
  \end{equation}  
  and, therefore, using~\eqref{int-ext} it follows that
  \begin{equation}\label{dir-neu-match}
    u_{\mathrm{int}}(\bsx) =u_{\mathrm{ext}}(\bsx) \ \ \text{and} \ \ \frac{\partial u_{\mathrm{int}}}{\partial \bm{\nu}}(\bsx) =\frac{\partial u_{\mathrm{ext}}}{\partial \bm{\nu}}(\bsx) \ \ \text{on} \ \ \partial \Omega.
  \end{equation}  
		
  Let us now define
  \begin{equation}\label{U}
    U_{\phi}(\bsx) =
    \begin{cases}
      u_{\mathrm{int}}(\bsx) & \mbox{for} \ \bsx \in \overline{\Omega} \\
      u_{\mathrm{ext}}(\bsx)& \mbox{for} \ \bsx \in \mathbb{R}^2\setminus
                              \overline{\Omega}.
    \end{cases}
  \end{equation}
  Since $u_{\mathrm{ext}}$ is the radiating solution of~\eqref{IHEext}
  and $u_{\mathrm{int}}$ is the solution of~\eqref{IHE}, on account
  of~\eqref{dir-neu-match} it follows that $U_{\phi}$ is the radiating
  solution of the Helmholtz problem~\eqref{FHE}-\eqref{eq:-Sommerfeld}
  throughout $\mathbb{R}^{2}$ with $u^i=0$. Since this problem admits
  a unique solution in $H^2_\mathrm{loc}(\mathbb{R}^2)$~\cite[Theorem
  8.7]{colton2013inverse} we conclude that $U_{\phi}$ vanishes
  identically. In particular, it follows that $u_{\mathrm{int}}=0$
  throughout $ \overline\Omega$ and, thus, $\phi=0$ in
  $\partial \Omega$ in view of \eqref{int-ext_2}. The proof is now
  complete.
\end{proof}
	
Having established the well posedness of the second-kind hybrid
formulation~\eqref{IHE_1}-\eqref{IBIE} we now present, in the next
section, the proposed numerical algorithm for the solution of this
problem.
	
\section{Numerical algorithm\label{sec:numericalAlgorithm}} \label{NA}
The proposed algorithm relies on the
formulation~\eqref{IHE_1}--\eqref{IBIE} in a computational domain
$\Omega$ which, for definiteness, throughout this paper is taken to
equal the square $\Omega = (-a,a)^2$ with a value of $a$ selected in
such a way that $\overline{\mathcal{D}}\subset \Omega$.  The algorithm
consists of two main components, namely 1)~A spectral volumetric
solver of fixed order of accuracy for the Boundary
Value Problem (BVP)~\eqref{IHE} in the domain $\Omega$ for given
impedance data $\phi \in H^{-1/2}(\partial \Omega)$; and 2)~ A solver
for the boundary integral equation~\eqref{IBIE} on $\partial \Omega$,
which couples the solution within $\Omega$ to the solution in the
exterior of that domain.  In order to achieve second-order convergence
for discontinuous scatterers the algorithm utilizes a filtered
Fourier-smoothing technique outlined in Section~\ref{FS}.  The overall
hybrid approach is completed via an application of the iterative
solver GMRES, as detailed in Section~\ref{Coupling}. As mentioned in
Section~\ref{sec:Introduction}, the overall hybrid method meets the
dual goals of achieving reduced iteration numbers while maintaining
the sparsity of the spectral matrix.

%	{\color{blue}
\subsection{Volumetric boundary-value solver}\label{pde_solver}
This section describes our discretization and direct solution strategy
for the BVP~\eqref{IHE} for given values of the impedance $\phi$ on
$\partial\Omega$.  The presentation includes two subsections, covering
the proposed filtered Fourier smoothing technique that enables
second-order convergence even for discontinuous scatterers
(Section~\ref{FS}), and the Chebyshev-based volumetric discretization
used (Section~\ref{PDES}).

\subsubsection{Filtered Fourier smoothing (FFS) of discontinuous
  refractivities} \label{FS}
	
As is well known, discontinuities in the refractive-index $n(\bsx)$
give rise to severe restrictions on the order of accuracy of the
numerical solutions of the scattering
problem~(\ref{FHE})-(\ref{eq:-Sommerfeld}): in such cases only
first-order accuracy is generally obtained. In the context of the
volumetric Lippmann-Schwinger integral-equation solvers, however,
Reference~\cite{hyde2005fast} shows that full second order convergence
can be reinstated for such problems by means of an application of a
certain Fourier-smoothing
technique~\cite{bruno2005higher,hyde2005fast}. In detail, a quadratic
convergence rate toward the solution for the exact refractivity
$n(\bsx)$ results in that context as the discontinuous contrast
function $m(\bsx)=1-n^2(\bsx)$ is replaced by truncations of its
Fourier series of certain orders, with the additional requirement that
sufficiently accurate values of the Fourier coefficients for the exact
discontinuous function $m(\bsx)$ be used; see Remark~\ref{FC-disc}
below.  Since~\eqref{IHE_1}--\eqref{IBIE} is equivalent to the
corresponding Lippmann-Schwinger problem, the same conclusions hold in
our present spectral context as well. In what follows we present a new
version of the Fourier-smoothing approach, which, incorporating a new
filtering component that eliminates a certain erratic convergence
behavior in the un-filtered approach (see
Table~\ref{Table:scat_by_circularInclusion}), is then applied to the
differential equations considered in this paper. The properties of the
resulting Filtered Fourier Smoothing (FFS) method are demonstrated in
practice via a variety of numerical results in
Section~\ref{sec:numericalResults}.
	
To introduce the method, letting $m= 1-n^2(\bsx)$ we re-express the
Helmholtz equation~\eqref{HE} in the form
\begin{equation}\label{HEM}
  \Delta w(\bsx) + \kappa^2 (1-m(\bsx)) w(\bsx) =0;
\end{equation}
in our context the resulting procedure will be applied to the
problem~\eqref{IHE} to obtain the intermediate solutions
$w=u_{\mathrm{int}}$, and, once convergence has been achieved for the
impedance data $\phi$, to produce the corresponding solution $w=u$
of~\eqref{IHE_1}--\eqref{IHE_2}.

As is well known, the Fourier series of the (possibly discontinuous)
function $m(\bsx)$ converges uniformly to $m(\bsx)$ except on the
discontinuity set, around which it suffers the well known
Gibbs-ringing artifact.  Assuming, for notational simplicity, a square
domain $\Omega$ of side $2a$, the FFS approach proposed in this
section utilizes the order-$F$ {\em filtered} truncated Fourier
expansion
\begin{equation}\label{TFS}
  m^{F}(\bsx)=\sum_{\ell_{1}=-F}^{F}\sum_{\ell_{2}=-F}^{F} c_{\ell_{1},\ell_{2}} e^{\frac{\pi i}{a} (\ell_{1}x_{1}+\ell_{2}x_{2})}
\end{equation}
of the $2a$-biperiodic Fourier series of $m$ in $\Omega$, where
$\bsx =(x_1,x_2)$ and where the filtered Fourier coefficient
$c_{\ell_{1},\ell_{2}} $ are given by
\begin{equation}\label{FC}
  c_{\ell_{1},\ell_{2}} =\left(\frac{1}{4a^2}\int_{-a}^{a}\int_{-a}^{a}m(x_{1},x_{2})e^{-\frac{\pi i}{a} (\ell_{1}x_{1}+\ell_{2}x_{2})}dx_{1}dx_{2}\right)\exp \left(-\alpha\left(\left( \frac{2\ell_1}{F}\right)^{2p}+ \left( \frac{2\ell_2}{F}\right)^{2p}  \right)\right).
\end{equation}
Here, $p$ and $\alpha$ are the parameters in the exponential filter
used; following~\cite{amlani2016fc}, throughout this paper the values
$p = 4$ and $\alpha=16 \log 10$ have been used.

\begin{remark} \label{FC-disc} Note that, for a discontinuous function
  $m$, evaluation of the integral~\eqref{FC} via an FFT would yield
  only first-order accurate coefficients---and would ultimately reduce
  the accuracy the overall solver to first order. A fast
  ($O(F^2\log F)$) algorithm for highly accurate evaluation
  of these coefficients follows from application of the
  (one-dimensional) FC-based integration method presented in
  Appendix~\ref{append} to the integral~\eqref{FC} in the $x_1$ and
  $x_2$ directions.
\end{remark}

	%Now,  Helmholtz equation in~\eqref{IHE_1}
	%
	%in the BVP~\eqref{IHE}, we
	%replace Helmholtz equation~\eqref{IHE_1} by~\eqref{HEF} and the numerical
	%solution of newly introduced formulation yield second order convergent
	%solution to the original problem~\eqref{IHE_1}--\eqref{IBIE}, in spite
	%of the low order approximation of $m(\bsx)$ by $m^{F}(\bsx)$ and
	%associated Gibbs errors. 
	
An additional difficulty associated with the smoothing algorithm still
needs to be tackled since, unlike the algorithm~\cite{hyde2005fast},
our strategy relies on use of non-equispaced (Chebyshev) volumetric
grids, and, therefore, a straightforward evaluation of the Fourier
series of the function $m^{F}(\bsx)$ at the required $N$
discretization points, for which an FFT cannot be directly employed,
generally requires an $O(NF^2)$ computational cost. Since,
generically, $F^2=O(N)$, the overall $O(N^2)$ cost of the
straightforward approach is unacceptably large within our scheme. One
can easily expedite this computation, however, by means of the
FFT-refined trigonometric polynomial interpolation method presented
in~\cite{bruno20071}, which yields high-order accuracy while
maintaining computational efficiency. In our context, once accurate
values of the Fourier coefficients $c_{\ell_{1},\ell_{2}}$ have
somehow been obtained, this interpolation approach can be performed as
a two-step procedure:
\begin{enumerate}
\item Evaluate the Fourier series $m^F(\bsx)$ on a sufficiently fine
  equispaced refinement of the associated $F^2$-point FFT grid. (In
  our examples the fine FFT grid is finer than the original grid by a
  factor of four in each dimension.) This step can be performed by
  means of an FFT on a zero-padded version of the sum~\eqref{TFS}, at
  a cost of $O(F^2\log F)$ operations.
\item In order to evaluate $m^F(\bsx_{0})$ for
  $\bsx_{0}=(x_{0},y_{0})\in \overline{\Omega}$, obtain the value of
  $m^{F}(\bsx)$ at a number $R$ of points neighboring $\bsx_{0}$ in
  the fine grid mentioned in point 1., and interpolate to $\bsx_{0}$
  by means of iterated one dimensional polynomial interpolation; see
  e.g.~\cite{bruno20071}. (In our examples we have used
  fifth order Lagrange polynomial interpolation.)
\end{enumerate}
This procedure yields interpolating polynomials that accurately
reproduce the exact values of the truncated Fourier series at an
$O(N\log N)$ computational cost.
\subsubsection{Volumetric discretization} \label{PDES}
	
This section presents the proposed direct solution strategy for the
numerical solution of the BVP~\eqref{IHE}. As discussed in
Section~\ref{FS}, discontinuities in the refractive index $n(\bsx)$,
if any, are dealt with by utilizing the modified BVP
\begin{align}\label{FSHE_1}
  &\Delta w(\bsx) + \kappa^2 \left(1-m^{F}(\bsx)\right) w(\bsx) = 0, \ \  \text{if} \ \bsx \in \Omega, \\ \label{FSHE_2}
  & 
    w(\bsx)+ i\beta \frac{\partial w}{\partial \bm{\nu}}(\bsx) = \phi(\bsx) \ \ \text{if} \ \bsx \in\partial \Omega
\end{align}
instead of the BVP~\eqref{IHE}---a procedure that, according
to~\cite[Corollary 3.9]{bruno2005higher}
(cf. also~\cite{hyde2005fast}) leads to second-order accurate
approximations to the actual solutions of the original
problem~\eqref{IHE} instead of the first-order convergence that would
otherwise result.  (Note that, interestingly, the proof and
illustrations presented in~\cite[Corollary 3.9]{bruno2005higher}
and~\cite{hyde2005fast} are given in the context of integral
formulations of the problem. But, since the integral-equation and PDE
solutions for the Fourier-smoothed problem coincide, the improved
approximation order carries over, as indicated above and demonstrated
in Section~\ref{sec:numericalResults}, to the present differential
formulation.)
	
For the discussion in the present section we assume that the impedance
data $\phi$ in equation~\eqref{FSHE_2} is known on $\partial
\Omega$. We wish to utilize a general-purpose fast sparse direct
solver, such as, e.g., the multifrontal
algorithm~\cite{davis2004algorithm,bollhofer2019large,alappat2020recursive},
for the solution of our discrete version
of~\eqref{FSHE_1}-\eqref{FSHE_2}. Naturally, the performance of sparse
linear-algebra solvers is highly dependent on the sparsity pattern of
the coefficient matrix of the linear system. In view of this fact, we
seek to approximate all necessary differential operators in such a way
that the resulting linear system is as sparse as possible while
maintaining essentially dispersionless approximations and higher order
accuracy.

To do this we approximate the unknown function $w$ and its derivatives
by means of local Chebyshev representations.  In detail, assuming, for
notational simplicity, a square computational domain $\Omega$, the
proposed BVP solver proceeds by first splitting $\Omega$ into a total
of $P\times P$ mutually disjoint square patches
$\Omega_{i,j},1\leq i,j\leq P$, such that
\[
  \overline{\Omega}=\bigcup_{i,j=1}^{P}\overline{\Omega}_{i,j}.
\]
	
Then, the solution of~~\eqref{FSHE_1}-\eqref{FSHE_2} is obtained by
solving the equivalent set of coupled transmission problems
\begin{align}\label{TP_S}
&\Delta w_{i,j}(\bsx) + \kappa^2 (1-m^{F}(\bsx)) w_{i,j}(\bsx) = 0, \ \ \text{if} \ \ \bsx \in \Omega_{i,j}, \\
\label{TP_T}
&	w_{i,j}(\bsx) =	w_{r,s}(\bsx) \quad  \text{and}\quad   \frac{\partial w_{i,j}}{\partial \bm{\nu}_{i,j}}(\bsx) =	 \frac{\partial w_{r,s}}{\partial \bm{\nu}_{r,s}}(\bsx) \quad
\text{if} \ \ \bsx \in \left(\Gamma_{i,j}\cap  \Gamma_{r,s}\right)\setminus \partial \Omega,	\\
\label{TP_E}
&	w_{i,j}(\bsx)+ i\beta \frac{\partial w_{i,j}}{\partial \bm{\nu}_{i,j}}(\bsx) =	
		\phi(\bsx),
\quad \mbox{if}  \ \ \bsx \in \Gamma_{i,j}\cap \partial \Omega,		
\end{align}
($1\leq i,r\leq P,1\leq j,s\leq P$, ), where
$w_{i,j}=w \big|_{\Omega_{i,j}}$, and where $\bm{\nu}_{i,j}$ denotes
the outward unit normal vector for the domain $\Omega_{i,j}$ on the
boundary $\Gamma_{i,j}=\partial\Omega_{i,j}$. For any
  pair of patches that share a common boundary, the
  conditions~\eqref{TP_T} amount to a manifestation, valid for
  smooth solutions, of the (uniquely solvable) weak formulation of
  equations~\eqref{FSHE_1}-\eqref{FSHE_2} in a multi-patch
  decomposition; see e.g. equations~(1.7) and~(1.8) in
  reference~\cite{kirsch1994analysis}.

To obtain the desired
solutions, for a given positive integer $q$ we discretize the closure
$\overline{\Omega}_{i,j}=[a_{i-1},a_{i}]\times [b_{j-1},b_{j}]$ of the
patch $\Omega_{i,j}$ by means of the two-dimensional tensor product
$ \mathcal{N}_{i,j}=\left\{\bsx_{i,j,k,\ell}\big|\, 0\leq k, \ell \leq
  q\right \}$ Chebyshev mesh given by
\[
  \bsx_{i,j,k,\ell}=\left(\frac{a_{i-1}+a_i}{2}+\frac{a_i -
      a_{i-1}}{2}\cos\left(\frac{\pi
        k}{q}\right),\frac{b_{j-1}+b_j}{2}+\frac{b_j -
      b_{j-1}}{2}\cos\left(\frac{\pi \ell}{q}\right)\right).
\]
Equations for the unknown values of $w_{i,j}(\bsx)$ at the grid points
$\bsx=\bsx_{i,j,k,\ell}$ ($1\leq i,j  \leq P$,
$0\leq k \leq q, 0\leq \ell \leq q$) are obtained by
enforcing discrete versions of equations~\eqref{TP_S}, \eqref{TP_T},
and~\eqref{TP_E}, as appropriate, at the discretization points
$\bsx_{i,j,k,\ell}$ (see Remark~\ref{sparse_corners}), via
approximation of the necessary differential operators
$\partial/\partial {\bm{\nu}}_{i,j}$ and $\Delta$ by Chebyshev
spectral differentiation matrices local to the relevant patch(es)
$\Omega_{i,j}$. These Chebyshev-based approximations of derivatives
remain accurate even for large wavenumbers, and, when used for
discretization of the joint transmission problem
(\ref{TP_S})--(\ref{TP_E}), they give rise to a sparse linear systems
of the form
\begin{equation}\label{LS_SD}
  {\bf A}{\bf w} ={\bf b},
\end{equation}
where the entries of the right hand side vector $\bf b$ associated
with observation points $ \bsx_{i,j,k,\ell} \in \Omega$ equal
zero, and where the entries corresponding to boundary points
$ \bsx_{i,j,k,\ell} \in \partial \Omega $ equal
$ \phi\left(\bsx_{i,j,k,\ell}\right) $.  The unknown vector
${\bf w}$, on the other hand, contains the $N$ unknowns
$w_{i,j,k,\ell}$, one corresponding to each point
$ \bsx_{i,j,k,\ell}$, where
  \begin{equation}\label{N_count}
  N = (q+1)^2P^2;
\end{equation}
note that, in particular, different unknowns are used at single
discretization points that are common to two subdomain
boundaries. Owing to its sparse nature, this linear system is suitable
for treatment by sparse linear solvers such as e.g. the
multifrontal-based direct
solver~\cite{davis2004algorithm,bollhofer2019large,alappat2020recursive}.

\begin{figure}[h!]
\centering
\includegraphics[scale=.5]{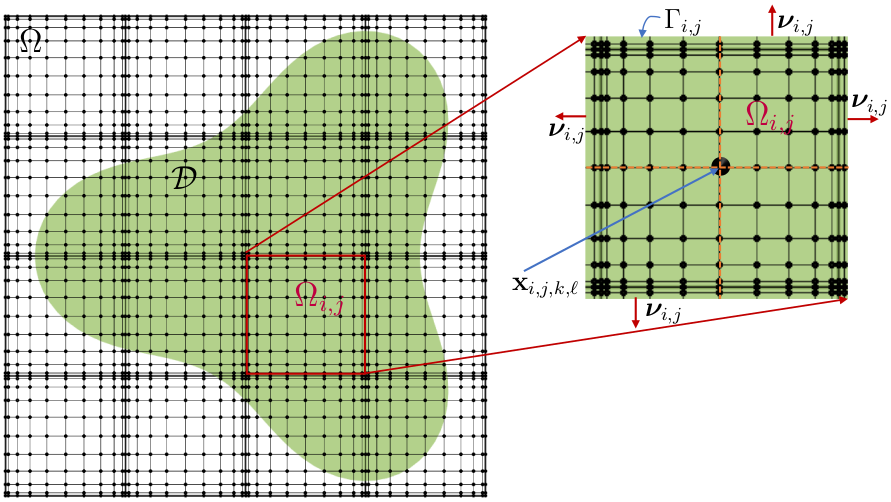}	
%	\subcaption{Each square patecs $\Omega_{i,j}$ is descretized by Chebyshev grids.}
	%	\end{minipage}
\caption{\small Domain-partitioning setup: the computational domain $\Omega$
  containing the inhomogeneity $\mathcal{D}$ is split into
  $P\times P$ Chebyshev patches $\Omega_{i,j}$, $1\leq i,j \leq P$,
   (with $P =4$ in this
  illustration). Derivatives at a given point
  ${\bf x}_{i,j,k,\ell}\in\Omega_{i,j}$ are evaluated as derivatives
  of the Chebyshev expansions obtained from function values along the
  lines passing through ${\bf x}_{i,j,k,\ell}$. The unit normal vector
  $\bm{\nu}_{i,j}$ on the boundary $\Gamma_{i,j}$ of the patch
  $\Omega_{i,j}$ points to the exterior of the patch. }
\label{grid}
\end{figure}
\begin{remark}\label{sparse_corners}
  As illustrated in Figure~\ref{grid}, the only non-zero entries in
  the equation associated with the point $\bsx_{i,j,k,\ell}$
  correspond to discretization points lying on the grid lines that
  pass through $\bsx_{i,j,k,\ell}$.  Two separate
    unknowns are used at each patch-boundary discretization point,
    which are then set to be equal as part of the equation
    system. Similarly, four separate unknowns are used at each patch
    corner point that is not on $\partial\Omega$, and two separate
    unknowns are used at each patch corner point that is on
    $\partial\Omega$.  This strategy is used so as to render each
    patch discretization independent of all other patch
    discretizations at a minimal increase in the number of unknowns.
  A question arises as to which of the two possible enforcements of
  the matching normal derivative conditions in~\eqref{TP_T}, either
  using horizontal or vertical normal derivatives, are used at corner
  points. The indeterminacy is resolved in our algorithm by means of
  the arbitrary but acceptable selection of horizontal normal
  derivatives in~\eqref{TP_T} at all corner points.
\end{remark}

The discrete version of the impedance quantity
$T_{\mathrm{int}}[\phi]$ (equation~\eqref{int_ItI}) which, for a given
$\phi$, is necessary as part of the proposed iterative algorithm for
the solution of~\eqref{TP_S}-\eqref{TP_E} (see
Section~\eqref{Coupling}), can readily be obtained by differentiating
the solution of the linear system~\eqref{LS_SD} on the basis of the
Chebyshev representations introduced in Section~\ref{PDES}.  It is
useful to note that, as indicated in Section~\ref{Coupling}, solutions
of the system~\eqref{LS_SD} with various right-hand sides (one
solution per iteration) are required as part of the proposed iterative
scheme. To obtain the necessary solutions at a reduced computing cost,
in our algorithm the {\bf LU} factorization of the sparse matrix {\bf
  A}, which is obtained by means of the efficient implementation MKL
Pardiso of the multi-frontal sparse
solver~\cite{bollhofer2019large,alappat2020recursive}, is computed
once and stored for repeated use in multiple GMRES iterations, or,
even, for multiple right-hand sides. The computational cost of
assembly of the matrix {\bf A} and evaluation of its {\bf LU}
factorization, whose combination amounts to the most expensive portion
of the overall hybrid volumetric solver, is studied in
Section~\ref{sec:numericalResults}. In particular,
Figure~\ref{fig:-time_complexity} and Table~\ref{Time_MKLPardiso} in
that section demonstrate a computing cost of $O(N^{\alpha})$
operations, with $\alpha \approx 1.07$, for this portion of the
algorithm, with a total number of the order of $O(qN)$ of non-zero
matrix entries.
		
\subsection {High-Order Approximation of the Boundary
  Integral Operators} \label{NS} The proposed algorithm utilizes a
fast, high-order Nystr\"{o}m integration algorithm for the evaluation
of the integral operators in
equation~\eqref{IBIE}. We note without a detailed
  proof that, in view of the smoothness of the solution $\phi$ in a
  neighborhood of $\partial \Omega$ (Remark~\ref{smth-dens}) together
  with stability theory (see e.g.~\cite[Th. 10.2]{kress2013linear}),
  the Chebyshev approximation of the density $\phi$ that we utilize in
  this section gives rise to high-order accuracy in the overall
  algorithm---as illustrated numerically in Table~\ref{smooth_scat}.
In what follows we describe the associated integration scheme and
certain connections with the overall volumetric iterative solver of
which it is a component. Clearly, it is desirable for the underlying
grid in the approximation of~\eqref{IBIE} to be a subset of the
volumetric (piece-wise Chebyshev) interior grid: otherwise an
additional fast and accurate interpolation procedure would be required
for the evaluation of the integral density to the underlying
quadrature points. To avoid such additional difficulties while
preserving maximal accuracy, we use a two-dimensional analog of the
rectangular polar integration scheme recently introduced
in~\cite{bruno2018chebyshev} for the solution of surface scattering
problems in the three dimensions. The resulting procedure is described
in what follows.
	
In a first stage, the entire integration domain $\partial \Omega$ is
covered by a set of non-overlapping boundary patches
$\left\{\gamma_{p}\right\}_{p=1}^{P}$ ($P = 4P$), each one of
which is the image of the interval $[-1,1]$ via a smooth invertible
mapping $\bm{\xi}_{p}$.  Using this covering and the parameterizations
$\bm{\xi}_{p}$, the integral operator that is used as part
of~(\ref{IBIE}) can be decomposed in the form
\[
  \int_{\partial \Omega} \left( \frac{\partial
      G_{\kappa}(\bm{\bsx-\bsy})}{\partial \bm{\nu(\bsy)}} \eta(\bsy)
    -G_{\kappa}(\bsx-\bsy)\zeta(\bsy) \right )ds(\bsy)=\sum_{p=1}^{P}
  I_{p}(\bsx),
\]
	
where
\begin{equation} \label{eq:-parametric_patch_int} I_{p}(\bsx)
  =\int_{-1}^{1} \left( \frac{\partial
      G_{\kappa}\left(\bsx-\bm{\xi}_{p}(t)\right)}{\partial
      \bm{\nu}\left(\bm{\xi}_{p}(t)\right)}
    \eta\left(\bm{\xi}_{p}(t)\right)
    -G_{\kappa}\left(\bsx-\bm{\xi}_{p}(t)\right)\zeta\left(\bm{\xi}_{p}(t)\right)
  \right )\left|\frac{\partial \bm{\xi}_{p}(t) }{\partial t}
  \right|dt.
\end{equation} 
	
An adequate choice of a methodology for the accurate evaluation
of~(\ref{eq:-parametric_patch_int}) depends on the relative position
of the target point $\bsx$ with respect to the integration patch
$\gamma_{p}$.  If the target point $\bsx$ is sufficiently far from
$\gamma_{p}$ then the integrand in~(\ref{eq:-parametric_patch_int}) is
smooth and can be integrated with high-order accuracy by means of any
high-order quadrature rule. On the other hand, if the target point is
either close to or within the integration patch, the integrand is
either singular or near singular, and hence a specialized quadrature
rule must be used for its accurate evaluation.  Thus, depending upon
the distance from the target point to the integration patch, the
overall integration approach relies on three different methods:
	
\textit{Evaluation of non-singular integrals:} For target points
$\bsx$ sufficiently far from the integration patch we use the
Clenshaw-Curtis quadrature which, as is known, enjoys high-order
convergence for smooth integrands~\cite{waldvogel2006fast}, and whose
discretization is taken to coincide with the restriction to
$\gamma_{p}$ of the volumetric discretization
$\cup_{i,j=1}^{P} \ \mathcal{N}_{i,j}$.
	
\textit{Evaluation of singular integrals:} For target points $\bsx$ in
the integration patch, the accurate approximation of
(\ref{eq:-parametric_patch_int}) becomes challenging in view of the
integrand singularity. To deal with this difficulty, we first replace
the density functions $\eta$ and $\zeta$
in~(\ref{eq:-parametric_patch_int}) by their Chebyshev expansions and
we thus obtain
\begin{equation}\label{eq:-cheby_replace}
  I_{p}(\bsx) =\sum_{\ell=0}^{M}c_{\ell}I_{p,\ell}^{1} (\bsx)+ \sum_{\ell=0}^{M}d_{\ell}I_{p,\ell}^{2}(\bsx),
\end{equation}
where
\begin{align}
  \label{SL}
  I_{p,\ell}^{1} (\bsx) &=	\int_{-1}^{1} \frac{\partial G_{\kappa}\left(\bsx-\bm{\xi}_{p}(t)\right)}{\partial \bm{\nu}\left(\bm{\xi}_{p}(t)\right)} T_\ell(t) \left|\frac{\partial \bm{\xi}_{p}
                          (t)}{\partial t}
                          \right|dt,\\
  \label{DL}
  I_{p,\ell}^{2}(\bsx) &=\int_{-1}^{1} G_{\kappa}\left(\bsx-\bm{\xi}_{p}(t)\right)T_\ell(t)\left|\frac{\partial \bm{\xi}_{p}
                         (t)}{\partial t}
                         \right|dt,
\end{align}
and where $T_\ell$ is the Chebyshev polynomial of degree $\ell$. The
Chebyshev coefficients $c _\ell,d_\ell$ can be obtained accurately and
efficiently by means of FFTs. Note that the integrals in
equations~\eqref{SL} and~\eqref{DL} do not depend on the density, and
therefore, may be computed only once and stored for repeated use. In
addition to this, evaluation of these integrals does not require
interpolation, even if refined meshes are used for their evaluation,
as the corresponding integrands are known analytically in the complete
domain of integration. However, evaluation of these integrals present
certain difficulties owing to the weakly singular character of the
integral kernel. To resolve the integrand singularity in
equations~\eqref{SL} and \eqref{DL} we utilize changes of variable
whose Jacobian vanishes along with several of its derivatives at the
singularity point. The idea is not limited to the specific kernel
presently under consideration, and it can be readily incorporated for
a general class of weakly singular kernels. Thus, we present our
discussion in that general context.
	
Letting
\begin{equation} \label{SI} I_\ell(\bsx)=\int_{-1}^{1}
  H_{\kappa}\left(\bm{\xi}_{p}(t_{0})-\bm{\xi}_{p}(t)\right)T_\ell(t)\left|\frac{\partial
      \bm{\xi}_{p} (t)}{\partial t} \right|dt,
\end{equation}
where $\bsx =\bm{\xi}_{p}(t_{0})$ and where
$H_{\kappa}\left(\bm{\xi}_{p}(t_{0})-\bm{\xi}_{p}(t)\right)$ is any
weakly singular kernel, we re-express $I_\ell$ in the form
\begin{equation} \label{SI_Break} I_\ell(\bsx)=\int_{-1}^{t_{0}}
  H_{\kappa}\left(\bm{\xi}_{p}(t_{0})-\bm{\xi}_{p}(t)\right)T_\ell(t)\left|\frac{\partial
      \bm{\xi}_{p}(t) }{\partial t} \right|dt+\int_{t_{0}}^{1}
  H_{\kappa}\left(\bm{\xi}_{p}(t_{0})-\bm{\xi}_{p}(t)\right)T_\ell(t)\left|\frac{\partial
      \bm{\xi}_{p} (t) }{\partial t} \right|dt.
\end{equation}
Both the first and second integrands in~\eqref{SI_Break} are singular
at $t=t_{0}$. To resolve the singularity we use the changes of
variables~\cite{colton2013inverse}
  \[
  t =t_{0}-\frac{1+t_{0}}{\pi}\omega_{k}\left[\frac{\pi}{2}
    (-\tau+1)\right] \quad\text{and}\quad t
  =t_{0}+\frac{1-t_{0}}{\pi}\omega_{k}\left[\frac{\pi}{2} (\tau+1)\right]
\]
in the first and second integrals in~\eqref{SI_Break}, respectively,
which, roughly speaking, distributes half of the
  discretization points near the singular point $t_0$, and the other
  half fairy uniformly throughout the integration
  interval~\cite[p. 84]{colton2013inverse}. Here, for
$0\leq s \leq2\pi$ and for a given integer $k>1$ we have set
\[
  \omega_{k}(s)=2\pi \frac{[v(s)]^{k}}{[v(s)]^{k}+[v(2\pi-s)]^{k}}, \
  \quad \text{where} \quad
  v(s)=\left(\frac{1}{k}-\frac{1}{2}\right)\left( \frac{\pi-s}{\pi}
  \right)^{3}+\frac{1}{k}\left( \frac{s-\pi}{\pi} \right)+\frac{1}{2}.
\]
It is easy to check that the Jacobians of these changes of variables
vanish up to order $k-1$ at the singular point $t=t_{0}$, which
renders smooth integrands that can be integrated with high-order
accuracy by means of the Clenshaw-Curtis quadrature.
	
\textit{Evaluation of near-singular integrals:} This case arises when
the target point $\bsx$ is ``very close'' to, but outside the
integration patch $\gamma_{p}$. In this case, while the integrand
in~\eqref{eq:-parametric_patch_int} is, strictly speaking,
non-singular, its numerical integration poses similar challenges to
the singular case. To effectively treat this issue we project the
target point to the closest point to it on the integration patch and
then follow the same strategy used for singular integration by
treating the projection point as the singular point.
	
\subsection{Overall hybrid solver
  \label{Coupling}} As discussed in the Section~\ref{prelim}, the
proposed method obtains the solution of the scattering
problem~\eqref{HE}-\eqref{eq:-Sommerfeld} by solving the equivalent
formulation~\eqref{IHE_1}--\eqref{IBIE}. If the impedance data $\phi$
in~\eqref{IHE_2} were known on $\partial \Omega$ then the solution of
the scattering problem~\eqref{HE}-\eqref{eq:-Sommerfeld} could be
readily obtained by solving the BVP~\eqref{IHE_1}-\eqref{IHE_2} using
the direct solution algorithm discussed in
Section~\ref{pde_solver}. To obtain $\phi$ on $\partial \Omega$,
equation~\eqref{IBIE} is solved iteratively, where, for each
iteration, the integral operator
$ \mathcal{A}^\mathrm{int}_\mathrm{ext}[\phi]$ (defined
in~\eqref{ERF}) is evaluated via the algorithm discussed in
Section~\ref{NS} in conjunction with the direct solution technique
presented in Section~\ref{pde_solver} for the evaluation of the
interior impedance operator $T_{\mathrm{int}}[\phi]$
(equation~\eqref{int_ItI}). Note that, per the first
  three sentences in Remark~\ref{sparse_corners} and in view
  of~\eqref{N_count}, the sparse $N\times N$ matrix $\mathbf{A}$
  associated with the interior problem (equation~\eqref{LS_SD})
  contains only $2(q+1)N$ non-zero entries.

The main lines of the proposed overall hybrid solver are as follows:
\begin{enumerate}
\item Replace the discontinuous refractivity $n^2(\bsx)$ in
  equation~\eqref{IHE_1} by its filtered Fourier-smoothed version
  $1-m^{F}(\bsx)$ as discussed in Section~\ref{FS}.
		
\item Using either an initial guess (e.g. $\phi = u^i(\bf{x})$) or any
  improved guess for $\phi$ produced by the linear-algebra solver
  GMRES, obtain the solution $u = u_{\mathrm{int}}$ of the
  problem~\eqref{IHE_1}-\eqref{IHE_2}, and then use
  equation~\eqref{int_ItI} to evaluate $T_{\mathrm{int}}[\phi](\bsx)$,
  and, thus, $(I+T_{\mathrm{int}})[\phi]$ and
  $(I-T_{\mathrm{int}})[\phi]$ on $\partial \Omega$.
		
\item Evaluate the left hand side of equation~\eqref{IBIE}, on the
  discretization of $\partial \Omega$, by applying the methods in
  Section~\ref{NS} to integral densities equal to
  $(I+T_{\mathrm{int}})[\phi]$ and $(I-T_{\mathrm{int}})[\phi]$.
		
\item Pass the resulting residual (equal to the difference between the
  left-hand and the right-hand sides in~\eqref{IBIE}) to the GMRES
  algorithm, to obtain a new approximation for the density $\phi$.
\item \label{four} Check for convergence of the density $\phi$ to a
  given prescribed residual tolerance, and iterate by returning to
  step 2 until convergence is achieved.
\item Solve the BVP~\eqref{IHE_1}-\eqref{IHE_2} for the converged
  impedance function $\phi$ obtained per point~\ref{four}. If desired,
  use equation~\eqref{GFSF} with
  $u_{\mathrm{ext}}=\frac 12(I+T_{\mathrm{int}})[\phi]$ and
  $\partial u_{\mathrm{ext}}/\partial \bm{\nu}=\frac{1}{2 i
    \beta}(I-T_{\mathrm{int}})[\phi]$ to produce $u$ in the exterior
  of $\Omega$ and/or, using the Green function
  asymptotics~\cite[Theorem 2.5]{colton2013inverse}, far field values
  for the solution $u$.
\end{enumerate}

\section{Numerical results\label{sec:numericalResults}}
	
This section presents results of numerical tests and examples that
demonstrate the performance of the scattering solvers introduced in
the Section~\ref{NA}, with an emphasis on problems containing
discontinuous refractivities.  All numerical results presented in this
section were produced by means of a C++ implementation of the
algorithms described in Section~\ref{sec:numericalAlgorithm} on a
single core of an Intel i7-4600M processor.  The relative error (in
the near field) reported here was computed according to the expression
\begin{equation*}
  \varepsilon^N_{\infty} = \frac{ \underset{1\leq i \leq N}{\max} \left|u^{\text{ref}}(\bsx_{i})-u^{\text{approx}}(\bsx_{i})\right|}{ \underset{1\leq i \leq N}{\max} \left|u^{\text{ref}}(\bsx_{i}) \right|},
\end{equation*}
where $\{\bsx_{i}\in\Omega:1\leq i\leq N\}$ is a listing of all
volumetric Chebyshev discretization points ${\bf x}_{i,j,k,\ell}$
considered in Section~\ref{PDES}, over all subdomains $\Omega_{i,j}$,
and where $u^{\text{ref}}$ is either a closed form solution, when
available, or a highly accurate numerical solution produced by the
proposed algorithm on a fine discretization.  GMRES tolerances were
prescribed in each case to achieve the desired solution error. Values
of the coupling parameter $\beta$ in the range
$10^{-5}\leq \beta\leq 10^{-3}$ were typically used: as shown in
Figure~\ref{fig:-beta} use of such values of $\beta$ suffices to
eliminate difficulties arising from resonance. (Typically smaller
values of $\beta$ tend to give rise to smaller numbers of iterations,
while slightly larger values of $\beta$ can result in somewhat higher
accuracies; we have found that use of large values of $\beta$, say, in
the range $1\leq \beta \leq 100$, however, can significantly increase
the iteration numbers required to meet a prescribed GMRES tolerance
and/or solution accuracy.)
	% We use the notation $\times n_{1}\times n_{2}$ to specify that
	%
	%$P\times P$ number of non-overlapping patches, each with $N_{1}\times N_{2}$ discretization points  are used for the corresponding numerical solution.
In all of the tabulated results the acronyms ``numIt'' and ``Order''
denote the number of GMRES iterations required to achieve the desired
accuracy and the numerical order of convergence
$\log\left( \varepsilon^{N}_{\infty}/\varepsilon^{2N}_{\infty}
\right)/\log(2)$ respectively.  In accordance with Section~\ref{PDES},
$ P\times P $ denotes the total number of Chebyshev patches used
in the discretization of the computational domain $\Omega$, each one
of which contains $q\times q$ discretization points;
cf. Figure~\ref{grid}.
\begin{figure}[h!]
  \centering
  \includegraphics[width=0.5\linewidth]{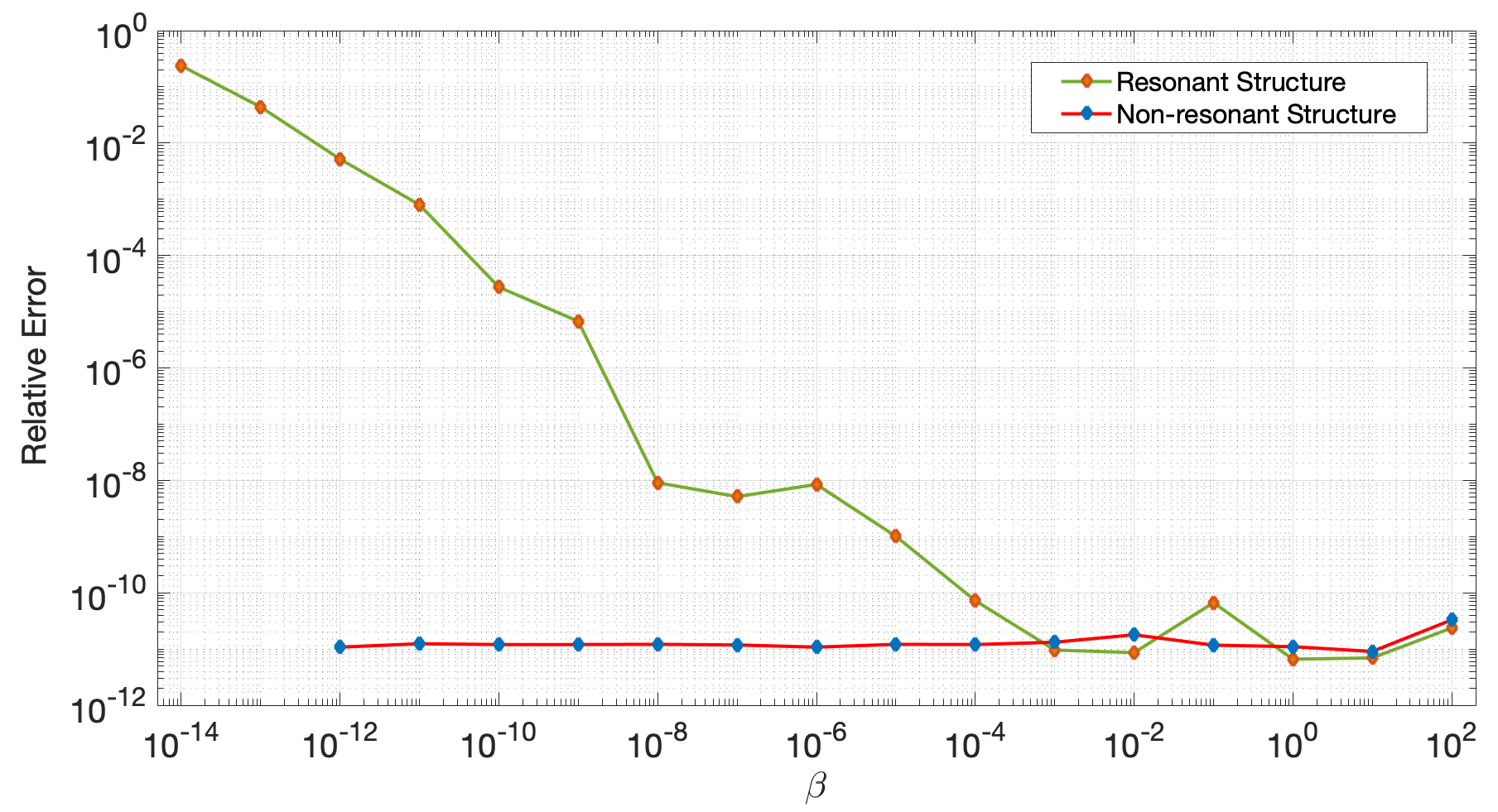}
  \caption{\small Accuracy of the proposed hybrid solver for the problem
    \eqref{IHE_1}--\eqref{IBIE}, as a function of the impedance
    parameter $\beta$ defined in Section~\ref{ItI}, with
    $\kappa=\sqrt{2}\pi$ and $n=1$ in the domain
    $\Omega=(-a,a)\times (-a,a)$ with $a=1$ (``resonant structure'',
    with interior eigenfunction $\sin(\pi x_1)\sin(\pi x_2)$) and
    $a=1.1$ (``non-resonant structure''). In all cases, resonant and
    non-resonant, the GMRES algorithm achieved the $10^{-12}$
    tolerance imposed. As illustrated in the figure, in non-resonant
    cases the error is essentially independent of $\beta$ as
    $\beta\to 0$---since in such cases the ItI map $T_{\mathrm{int}}$
    is well defined for all $\beta$, up to and including $\beta =0$.
    In the resonant case, in contrast, use of a nonzero value of
    $\beta$ is necessary to ensure the map $T_{\mathrm{int}}$ is well
    defined and the algorithms accuracy does not
    deteriorate. \label{fig:-beta}}
\end{figure}

\begin{exmp}(\textit{High-order Convergence for the Boundary Integral
    Operator})
\end{exmp}
This example illustrates the high-order convergence of the singular
integration technique introduced in Section \ref{NS}.
	% In this example, We use the notation $P_{1}\times N_{1}$ to specify that $P$ number of non-overlapping patches, each with $N_{1}$ discretization points  are used for the corresponding numerical solution. 
For our example we let $\kappa =5\pi$,
$v(\bsx)=u^{i}(\bsx) = e^{i\kappa x_1}$ and
$\overline{ \Omega} =\{(x_{1},x_{2})| -1.5\leq x_{1},x_{2}\leq 1.5\}$, and we
evaluate numerically the integral
\begin{equation}\label{eq:-intopt}
  2\int \limits_{\partial \Omega}\left \{G_{\kappa}(\bsx - \bsy) \frac{\partial{v(\bsy)}}{\partial \bm{\nu}(\bsy)}-\frac{G_{\kappa}(\bsx - \bsy)}{\partial \bm{\nu}(\bsy)}v(\bsy)\right \}d\bsy
\end{equation}
for $\bsx\in\partial \Omega$---whose exact value, in view of Green's
theorem, is $ e^{i\kappa x_1}$.  The corresponding results over
successive discretizations are presented in
Table~\ref{Table:PPW_High-order}, clearly demonstrating high-order
accuracy.
	
The proposed integration scheme additionally remains accurate for
large frequencies. To illustrate this, we have computed the integral
(\ref{eq:-intopt}) for various wavenumbers; the corresponding results
are presented in Table~\ref{Table:PPW_disperson} for experiments with
a fixed number of points per
wavelength. Table~\ref{Table:PPW_disperson} shows that, as claimed,
the proposed scheme does not deteriorate as the wavenumber is
increased while keeping a constant number of points per wavelength.
\begin{table}
		%	\begin{minipage}[b]{.48\textwidth}
  \begin{minipage}[t]{0.45\linewidth}\centering
    \centering
    \begin{tabular}{c|c|c|c|c} \hline \hline
      $\kappa$ & $P$ & 	$q$& $\epsilon_{\infty}^{N}$ & Order\\
      \hline
      $5 \pi$& 8 & $6$ & $2.7 \times 10^{-0}$& - \\
      $5 \pi$& $8$ & $12$  &  $5.1 \times 10^{-1}$ &$2.4$ \\
      $5 \pi$& $8$ & $24$ & $ 2.7 \times 10^{-3} $ & $7.5$ \\
      $5 \pi$& $8$  & $48$ &  $3.9\times 10^{-8}$ &$16.1$ \\
      $5 \pi$& $8$ & $96$ & $4.9 \times 10^{-11}$ &$9.6$ \\
      $5 \pi$& $8$ &$192$ &  $1.2 \times 10^{-13}$ &$8.7$ \\
      \hline \hline
    \end{tabular}  
    \caption{\small Convergence study for the singular integration method
      introduced in Section~\ref{NS}.  Numerical errors were obtained
      by comparison against closed-form values of the
      integral~\eqref{eq:-intopt}.}
    \label{Table:PPW_High-order}
  \end{minipage}\hfill
			%	\begin{minipage}[b]{.48\textwidth}
  \begin{minipage}[t]{0.498\linewidth}\centering
    \centering
    \begin{tabular}{c|c|c|c|c}
      \hline
      \hline
      $\kappa$ & $P$&$q$ &  PPW & $\epsilon_{\infty}^{N}$\\ 
      \hline
      $10 \pi$ & $12$ & $30$ & $6 $ &$2.3\times 10^{-5}$ \\ 
      $20 \pi$&24  &30 & 6 &$2.6 \times 10^{-5}$ \\ 
      $40 \pi$ &48 &30  &  6 & $2.7 \times 10^{-5}$ \\ 
      $80 \pi$  & 96  &30 & 6 & $2.9 \times 10^{-5}$\\ 			
      $160\pi$&192 &30 &  6 &$ 3.0 \times 10^{-5}$ \\ 
      $320\pi$&384 &30 &  6 & $3.1 \times 10^{-5}$ \\ 
      \hline	
      \hline
    \end{tabular}  
    \caption{\small Illustration of the proposed
      high-order integration scheme for large
      wavenumbers with a fixed number of points per wavelength.}
    \label{Table:PPW_disperson}
  \end{minipage}
\end{table}

\begin{exmp}
(\textit{Sparsity and Efficiency of the Hybrid Approach})\label{SEHA}
\end{exmp}
As discussed in the introduction, use of the hybrid direct/iterative
strategy, in which the boundary integral equation is treated
iteratively, provides a significant advantage over the corresponding
direct non-hybrid approach, in which a matrix is constructed for the
(complete) coupled volume and boundary discretization. This advantage
arises mainly from sparsity: the matrix ${\bf A}$
(equation~\eqref{LS_SD}) associated with the hybrid approach is
significantly sparser than the corresponding non-hybrid matrix, as the
coupling induced by the boundary integral operator introduces large
numbers of nonzero matrix entries. As a result (and as demonstrated
below in this section) the hybrid approach lends itself much more
effectively to treatment via multifrontal linear-algebra solvers.  To
visualize the source of the sparsity enjoyed by the matrix ${\bf A}$
we note that, in the non-hybrid approach, each boundary entry gives
rise to an equation that links all $4P(q+1)$ boundary unknowns and,
additionally, in view of equations~\eqref{IHE_1} through~\eqref{IBIE},
$(q-1)$ interior unknowns per boundary unknown (as needed to compute
the normal derivative at each boundary point)---so that, in total,
each equation resulting from a boundary point contains $ 4Pq(q+1) $
nonzero entries. This is in contrast to the equations arising from
interior unknowns, each one of which contains merely $2 (q+1)$
non-zero entries. The benefit provided by the hybrid method is that it
decomposes the problem into two parts: a first one that uses a direct
solver for the sparse matrix associated with interior unknowns, and a
second one which treats the boundary unknowns by means of an iterative
procedure.

Table~\ref{H_NH_NNZ}, which displays the total number ``NNZ'' of
non-zero matrix entries contained in the matrices
  treated by means of a direct solver for the hybrid and non-hybrid
methods, demonstrates the sparsity patterns achieved in practice by
the proposed hybrid approach. As illustrated in
Figure~\ref{fig:-time_complexity}, further, such sparsity patterns
translate into fast pre-computation and solution times---which, in
fact, grow nearly linearly with the discretization
size.

\begin{table}[H] \centering
 \begin{tabular}{c|c|c|c|c}
  \hline \hline$P \times P$ & $q \times q$ & \# Bdry. Unknowns & \multicolumn{2}{|c}{ Bdry. unknowns NNZ } \\
  \cline { 4 - 5 } & &$4P (q+1)$  & Non-hybrid & Hybrid \\
  \hline $16 \times 16$ & $10 \times 10$ & $704$ &$5,451,776$ & $7744$ \\
  $32 \times 32$ & $10 \times 10$ & $1408$ &$21,807,104$ & $15488$ \\
  $64 \times 64$ & $10 \times 10$ & $2816$ &$87,228,416$ & $30976$ \\
  $128 \times 128$ & $10 \times 10$ & $5632$ &$348,913,664$ & $61952$ \\
  $256 \times 256$ & $10 \times 10$ & $11264$ & $1,395,654,656$ & $123904$ \\
  \hline
  \hline
 \end{tabular} 
\caption{\small Number NNZ of non-zero matrix entries associated with each boundary unknown for
the non-hybrid and hybrid algorithms, respectively, for various discretization sizes. The greatly enhanced sparsity pattern associated with the hybrid method enables efficient use of multi-frontal linear-algebra
solvers. } \label{H_NH_NNZ}
\end{table}

\begin{table}[H]
	\centering
\begin{tabular}{c|c|c|c|c|c|c|c|c|c}
		\hline \hline$P \times P$ & $N$ & \multicolumn{2}{|c|}{ Pre-comp. (sec.) } & \multicolumn{3}{|c|}{ Per-it. time (sec.) } & \multicolumn{3}{c}{ Memory required (MB) } \\
	 \cline{3-10}  & & $\bf A$/BIE   & {\bf LU}-D & {\bf LU}-inv/It & BIE/It & Tot. &   $\bf A$ storage& {\bf LU}-D & Ratio \\
  \hline $16 \times 16$ & 30,976 & -/0.6 & 0.5 & .04 & .01 & .05 & - & - & - \\
  $32 \times 32$ & 123,904 & .03/3 & 2 & .15 & .06 & 0.21 & - & - & - \\
  $64 \times 64$ & 495,616 & .12/12 & 10& .6 & .3 & 0.9 & 502 & 1,497 & 2.98 \\
  $128 \times 128$ & $1,982,464$ & .46/46 & 46 & 3 & 1 & 4 & 12,99& 8,104 & 6.24 \\
  $175 \times 175$ & $3,705,625$ & .88/87 & 93 & 5 & 2 & 7 & 2,222 & 15,617 & 7.03 \\
  $200 \times 200$ & $4,840,000$ & 1.11/113& 131 & 7 & 3 & 10 & 2,830 & 2,1250 & 7.50 \\
  $256 \times 256$ & $7,929,856$ & 2.00/182 & 247 & 11 & 4 & 15 & 4,484 & 34,624 & 7.72 \\
  $350 \times 350$ & $14,822,500$ & 4/340 & 541 & 22 & 7 & 29 & 8,175& 66,410 & 8.12 \\
  \hline \hline
	\end{tabular}
        \caption{\small Computing times and memory
            required by the various portions of the hybrid algorithm
            on the computational domain $\Omega = [-.5,.5]^2$ with
            $\kappa = 800$. Each one of the $P\times P$ Chebyshev
            patches used was discretized by means of a $q\times q$
            Chebyshev-point discretization with $q=10$. The titles
            used are defined in the text. The BIE precomputation cost
            can be essentially eliminated if an accelerated Green
            function
            method~\cite{bauinger2021interpolated,coifman1993fast} is
            utilized. As indicated in the text, after the
            precomputation stages, small additional memory costs
            suffice to perform even very large numbers of GMRES
            iterations, if needed.}
\label{Time_MKLPardiso} 
\end{table}

Table~\ref{Time_MKLPardiso} and its caption, in turn,
  report computing times and memory required to perform each one of
  the various operations associated with the hybrid method. Thus, in
  particular, for a problem involving nearly 15 million unknowns, the
  single-core precomputation and per-iteration computing times amount
  to $344 + 541\approx 900$ sec. and $22 + 7\approx 30$
  sec. respectively, with a corresponding memory cost of
  $(8,175 + 66,410 + 7,500) $ MB $\approx 82$ GB. (The BIE
  precomputation time and memory cost, the latter one of which is not
  listed in Table~\ref{Time_MKLPardiso}, but which amounts to
  e.g. 7,500 MB for the $\approx 15$ million unknown problem, can be
  essentially eliminated if an accelerated Green function
  method~\cite{bauinger2021interpolated,coifman1993fast} is utilized.)
  An additional (small) memory cost is associated with each GMRES
  iteration: in the $\approx 15$ million unknown test case, for
  example, after an initial integral equation setup memory cost of
  $1,732$ MB, every 100 iterations require a mere $25$ MB of
  additional memory. Thus, in view of
  Table~\ref{Table:Scat_By_Large_Contrast} below, using this
  discretization a solution with an error of the order of $10^{-3}$
  for a domain spanning {\em 350 wavelengths} in diameter containing a
  {\em discontinuous} refractive index can be obtained, on the basis
  of 12 iterations, in a {\em single-core} CPU time of
  $\approx 900 + 12 \cdot 30\ = 1,260$ secs. $= 21$ mins.

(The titles used in Table~\ref{Time_MKLPardiso}
  and~\ref{q10_PC} are defined as follows. The ``Pre-comp'' columns
  list the costs of the various precomputation stages, namely
  ``$\mathbf{A}$'': computing time required to produce the interior
  matrix; ``BIE'': computing time required to evaluate all the
  necessary values of the Green function; and ``LU-D'': computing time
  required to obtain the LU decomposition of $\mathbf{A}$ by means of
  the multifrontal linear-algebra software ``Intel MKL PARDISO''.  The
  ``Per-it'' columns lists costs necessary to perform each iteration,
  namely ``LU-inv/It'': computing time required at each iteration of
  the iterative hybrid algorithm to solve equation~\eqref{IBIE} on the
  basis of the precomputed LU decomposition; and ``BIE/It'': computing
  time required to apply the discrete version of the integral operator
  $T_\mathrm{int}$ in equation~\eqref{IHE_2}, respectively; the column
  ``Tot.'' lists total computing time per iteration.  The ``Memory
  required'' column in Table~\ref{Time_MKLPardiso} lists memory costs,
  including``$\bf A$ storage'': Memory required for storage of the
  matrix $\bf A$ and associated data required by the software``Intel
  MKL PARDISO''; and ``{\bf LU}-D'': Total memory required by the
  solver Pardiso for the precomputation of the {\bf LU} decomposition
  of the matrix $\bf A$; the column ``Ratio'' lists the ratio of the
  memory requirements in the two previous columns.)

\textcolor{blue}{
  \begin{figure}[h!]
	\begin{minipage}[b]{0.45\linewidth}
          \centering
          \includegraphics[width=\linewidth]{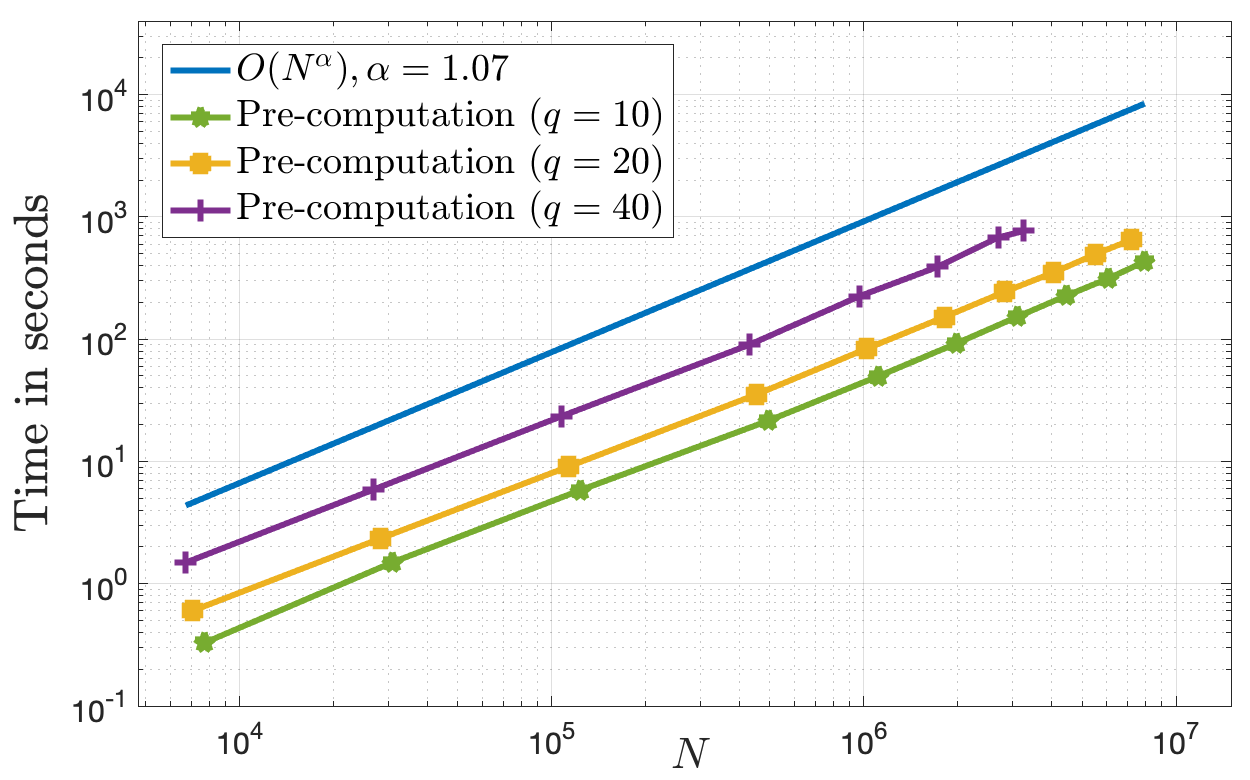}
%          \subcaption{Precomputation costs for $q=10$, $q=20$ and
%            $q=40$, as a function of $N$, vs. a curve
%            $O(N^{\alpha}$) with $\alpha = 1.07$.
%			%. The slope of the lines(using Matlab command ``polyfit") for $q=10,20,40$ are $1.06,1.03,1.03$ respectively.
%		 }
\subcaption{Pre-computation costs for $q=10$, $q=20$, and $q=40$.}
	\label{fig:-time_precomputation} 
	\end{minipage}
	\hspace{0.1cm} 
	\begin{minipage}[b]{0.47\linewidth}
          \centering
          \includegraphics[width=\linewidth]{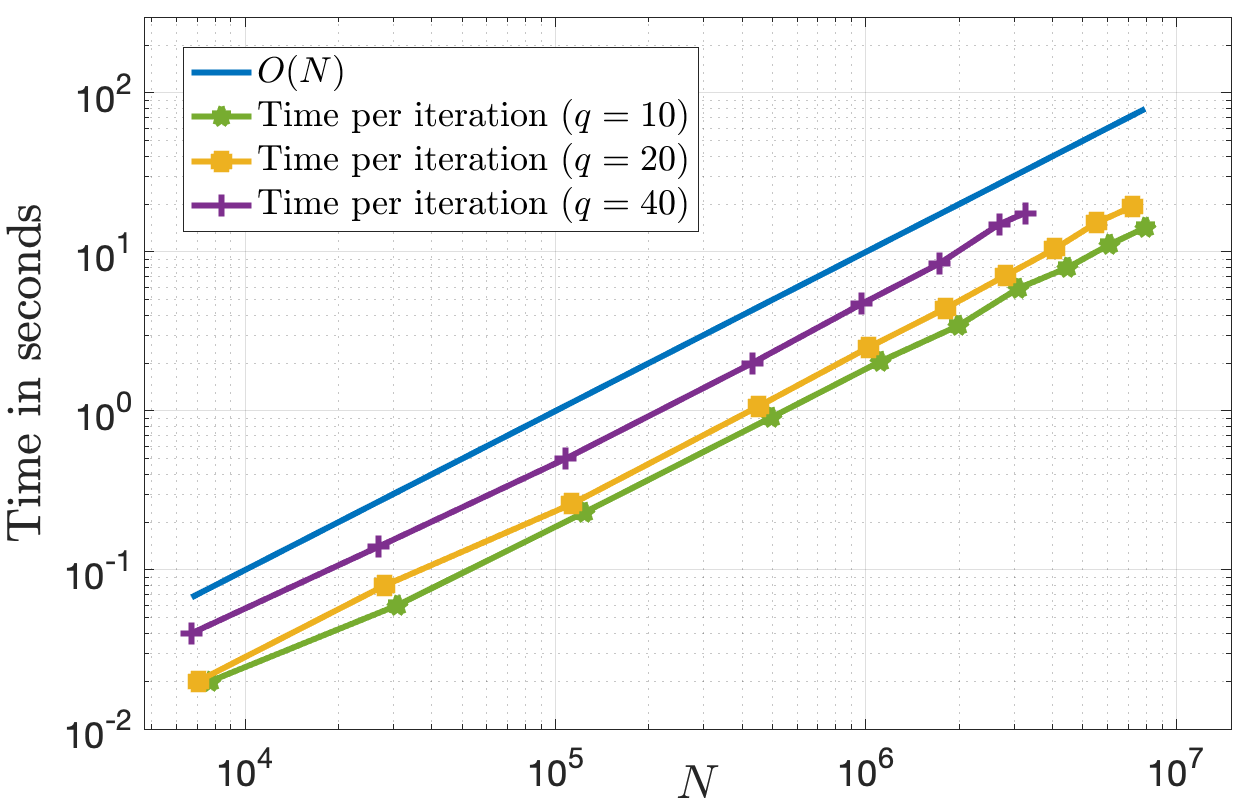}
%          \subcaption{Computing time per iteration versus a curve
%            $ O(N)$.}
\subcaption{Time per GMRES iteration for $q=10,20,$ and $q=40$.}
		\label{fig:-time_per_iteration}
	\end{minipage}
	\caption{\small \textbf{Left:}
            Pre-computation time in seconds (required to build the
            matrix $\bf A$, to obtain its $\bf LU$ decomposition and
            to produce the BIE Green-function precomputation), for
            $q=10$, $q=20$ and $q=40$, as a function of $N$, with
            $\kappa = 800$ and for all discretizations allowable
            within the available memory, vs. a curve $O(N^{\alpha}$)
            with $\alpha = 1.07$. \textbf{Right:} Per-iteration time
            (in sec.)  required by the hybrid algorithm with $q=10$,
            $q=20$ and $q=40$, as a function of $N$, vs. a line
            $O(N)$). As illustrated in Table~\ref{q10_PC}, a variety
            of numerical experiments have shown that these computing
            times are essentially constant asymptotically as $\kappa$
            grows.}
	\label{fig:-time_complexity}
      \end{figure}
    }

    \begin{table}[H]
	\begin{center}
	\begin{tabular}{c|c|c|c|c|c|c}
	\hline \hline
	$ P \times P$ & $q \times q$ & $\kappa$ & \multicolumn{3}{c|}{Pre-comp. (in sec.)} & Per it. time   \\
	\cline{4-6}& & & $\bf A$/BIE  & $\bf LU$-D  &  Tot. & (sec.)  \\
	\hline
%	\hline
%	160 & 10 & 100 & 69.82& 79.23  & 149.05 & 6.04  \\
%	192 & 10 & 100 & 101.01 & 122.22 & 223.23 & 8.43  \\
%	224 & 10 & 100 & 135.36  & 171.06 & 306.42  & 11.88  \\
%	
	
	$256\times 256$ & $10 \times 10$  & 10 & 2.00/134.74 & 246.52 & 383.26 & 15.20  \\
	$256\times 256$ & $10 \times 10$  & 100 & 2.00/174.89 & 246.74 & 423.63 & 15.28  \\
	$256\times 256 $ & $10 \times 10$  & 200 &  2.00/181.32 & 246.56 & 429.88 & 15.42  \\
	$256\times 256$ & $10 \times 10$ & 400 &  2.00/180.03   & 246.18 & 428.21 &  15.49 \\
	$256\times 256 $ & $10 \times 10$  & 800 &   2.00/181.89  & 247.74 & 431.63 & 15.49 \\
	\hline
	\hline
		
        \end{tabular}\caption{\small Pre-computation time for $q=10$ for various values of $\kappa$ with fixed $N = P^2\times (q+1)^2 = 7,929,856$. } \label{q10_PC}
\end{center}
\end{table}

%\begin{figure}[h!]
%\begin{minipage}[b]{0.5\linewidth}
%  \centering
%  \includegraphics[width=\linewidth]{Pre-computation_time.png}
%  \subcaption{Precomputation cost (dotted line) and curve
%    $O(N^{\alpha})$, with $\alpha = 1.07$.}
%\label{fig:-time_precomputation} \end{minipage}
%\hspace{0.1cm} 
%\begin{minipage}[b]{0.47\linewidth}
%\centering
%\includegraphics[width=\linewidth]{time_per_iter.png} 
%\subcaption{
%$O(N)$ cost for per GMRES iteration.} 
%\label{fig:-time_per_iteration}
%\end{minipage}
%\caption{\small  Required time, in seconds, for the pre-computation and iteration stages of the hybrid algorithm. } 
%\label{fig:-time_complexity}
%\end{figure}

\begin{exmp}
	(\textit{Scattering by a Smooth Gaussian Bump}) 
\end{exmp}
This example demonstrates the high-order convergence enjoyed by the
proposed algorithm when applied to smooth contrast functions
$m(\bsx)$. In detail, we consider the total field $u$ that arises
under plane wave excitation incident from the positive $x_1$ axis, for
the contrast function given by the smooth Gaussian bump
$m(\bsx) =-1.5e^{-60|\bsx|^2}$. Numerical results for the domain
$\Omega=(-0.5,0.5)^2$ and $\kappa = 20\pi$, at various discretization
levels, are displayed in Table \ref{smooth_scat}---clearly
demonstrating the high-order convergence of the proposed algorithm for
smooth scattering media.  The extremely low dispersion provided by the
proposed algorithm is demonstrated in
Table~\ref{Table:scat_by_large_smooth_scat} (see also
  Tables~\ref{Table:Scat_By_Large_Disc}
  and~\ref{Table:Scat_By_Large_Contrast})---which shows that, for the
same domain $\Omega$, the accuracy is maintained while keeping the
number of points per wavelength fixed---even for large
frequencies.% In

\begin{table}[h!]  \centering 
\begin{tabular}{c|c|c|c|c|c} 
\hline
\hline
$\kappa$ & $P\times P$ & 	$q\times q$& $\epsilon_{\infty}^{N}$ & Order & $\#$ Iter\\
\hline
$20 \pi$& $2 \times 2$ & $10 \times 10$ & $1.39 \times 10^{-0}$& - & 28\\
$20 \pi$& $4 \times 4$& $10 \times 10$  &  $5.81 \times 10^{-1}$ & 1.22 & 28 \\
$20 \pi$& $8 \times 8$ & $10 \times 10$ &  $9.50 \times 10^{-3}$ & 5.93 & 28 \\
$20 \pi$& $16 \times 16$  & $10 \times 10$  &  $2.11 \times 10^{-5}$ &8.81 & 28 \\
$20 \pi$& $32 \times 32$ & $10 \times 10$ & $1.08 \times 10^{-7}$ & 7.61 & 28 \\
$20 \pi$& $64 \times 64$ & $10 \times 10$ & $1.50 \times 10^{-10}$ &9.49& 28 \\

\hline
\hline \end{tabular}
\caption{\small Convergence
study for the smooth Gaussian bump test case. For these experiments the GMRES residual tolerance and the coupling
parameter $\beta$ were set to $10^{-12}$ and $10^{-5}$, respectively.}  
\label{smooth_scat}
\end{table}

  \begin{remark}\label{it_num_rem}
    It is important to note the relatively mild (roughly linear)
    increases in iteration numbers demonstrated in
    Tables~\ref{Table:scat_by_large_smooth_scat}
    and~\ref{Table:Scat_By_Large_Disc} as the incident frequency grows
    (cf. references~\cite{bruno2004efficient,laird2002preconditioned}). The
    observed linear growth is purely associated with the spectral
    character of the boundary integral operators used, and it results
    as the algorithm bypasses, by means of its interior direct solver
    component, the iterative resolution of all interior
    multiple-scattering events that would otherwise require
    significantly larger iteration numbers.
\end{remark}

 \begin{table}[H] 
 	\begin{center}
 		 \begin{tabular}{c|c|c|c|c|c|c|c}
\hline
\hline
$\kappa$& $P\times P$ &$q\times q$ & $N/\Gamma_{N}$  &$\epsilon_{\infty}^{N}$ &$\#$ Iter.& \multicolumn{2}{c}{Time (sec.)} \\
\cline{7-8} & & & & & &pre-comp& per. It.\\
\hline
% $20 $ & $4\times 4$ & $12 \times 12$ & $2.4 \times 10^{-4}$& $4$ & $0.3$ &$-$ \\
\hline
$50 $ & $10 \times 10$ & $10 \times 10$ &  $14641/484$ &$2.75 \times 10^{-5}$ & $14$ & $.63$ &$0.03$\\
\hline
$100$ & $22\times 22$ & $10 \times 10$  & $58564/968$ &$6.66\times 10^{-5}$ & $35$ & $2.35$ &$0.1$\\
\hline
$200$ & $44\times 44$ & $10 \times 10$  & $234256/1936$ & $1.09\times 10^{-4}$ & $76$ & $9.68$ &$0.41$\\
\hline
$400$ & $88\times 88$ & $10 \times 10$  & $937024/3872$ &$2.12\times 10^{-4}$ & $162$ & $43$ &$1.62$\\
\hline
$800$ & $176\times 176$ & $10 \times 10$  & $3748096/7744$ &$3.54\times 10^{-4}$ & $291$ & $194$ &$6.87$\\
\hline
\hline \end{tabular}
\caption{\small Numerical solution for a problem of scattering by the
  smooth Gaussian bump example for a range of frequencies, including
  high-frequency cases. Approximately $9.4$ points per shortest
  wavelength (which occurs at ${\bf x}=0$) were used for the
  $\kappa = 50$ through $\kappa = 800$ examples (for $\kappa =800$ the
  computational domain is two-hundred five shortest wavelengths in
  size). For these experiments both the GMRES residual and the
  coupling parameter $\beta$ were set to $10^{-5}$.}
 \label{Table:scat_by_large_smooth_scat}
 \end{center}
 \end{table}

 \begin{exmp}\label{ex_fs}
 	(\textit{Fourier Smoothing  and scattering by a discontinuous refractive index distribution})
  \end{exmp}

  This example demonstrates the character of the proposed Filtered
  Fourier Smoothing strategy (Section~\ref{FS}) for penetrable
  inhomogeneous media with discontinuous refractivity---via an
  application to the canonical problem of scattering by a circular
  scatterer.  For this experiment we considered a circular scatterer
  $\mathcal{D}$ of diameter $d=1$, and with discontinuous refractive
  index given by $n^{2}(\bsx)=2$ for $\bsx \in \mathcal{D}$ and
  $n^{2}(\bsx)=1$ for $\bsx \not \in \mathcal{D}$. The computational
  domain $\Omega = (-.51,.51)\times (-.51,.51)$ was used.  An incident
  wave of the form $u^{i}(\bsx)=J_{0}(\kappa|\bsx|)$ was assumed,
  where $J_{0}$ is the Bessel function of the first kind of order
  zero. With this incident wave a closed form expression for the
  solution of the problem (\ref{HE})-(\ref{eq:-Sommerfeld}) is
  known~\cite{andersson2005fast}. Table~\ref{Table:scat_by_circularInclusion}
  presents errors obtained in the numerical solution with and without
  Fourier smoothing, and including regular Fourier
    smoothing (FS) and filtered Fourier smoothing (FFS), for various
    discretization levels, clearly demonstrating the quadratic
    convergence of the FFS-based approach, the slower and rather
    erratic convergence in absence of Fourier smoothing, and the
    improvements resulting from the use of filtering.
    Table~\ref{Table:Scat_By_Large_Disc}, in turn, concerns the
    character of the FFS method under high-frequency illumination,
  displaying fixed accuracies (of the order of three digits in this
  case), for the $n^{2}(\bsx)=2$ scatterer $\mathcal{D}$ just
  considered and for problems up to $276\cdot \lambda_\mathrm{int}$ in
  diameter, where $\lambda_\mathrm{int}= \frac{2\pi}{n\kappa}d$
  denotes the wavelength in the interior of $\mathcal{D}$. We note
  that a fixed accuracy is maintained using 11 points per wavelength,
  demonstrating, additionally, the dispersionless character of the
  algorithm even under a discontinuous index of refraction, for which
  the accuracy of the algorithm is reduced to second
  order. Table~\ref{Table:Scat_By_Large_Contrast}, finally, presents
  numerical results for highly refractive scatterers. In contrast with
  the behavior observed in the case of high-frequency illumination, in
  the present case, in which high-frequencies result from
  corresponding large refractive indexes, the iteration numbers
  required to maintain accuracy remain fixed as the refractivity
  values are increased---on account of the direct matrix solution used
  for the interior problem, and in spite of the resulting
  high-frequency interior scattering phenomenology.

\begin{table}[H]
\begin{center}
 \begin{tabular}{c|c|c|c|c|c|c|c|c}
\hline \hline
$\kappa d$ &$P\times P$ &$q\times q$& \multicolumn{2}{c|}{Without FS}  & \multicolumn{2}{c|}{With FS (without filter)}	 & \multicolumn{2}{c}{With FFS (incl. filter)} \\
\cline{4-9} & & & $\epsilon_{\infty}^{N}$& Order & $\epsilon_{\infty}^{N}$ &Order  & $\epsilon_{\infty}^{N}$ &Order \\
\hline
$10 \pi$& $ 3\times 3 $& $10 \times 10$& $8.80  \times 10^{-2}$ &-    & $1.66 \times 10^{-1}$  & -   & $2.14 \times 10^{-1}$& - \\
$10 \pi$& $6\times 6 $& $10 \times 10$& $1.27 \times 10^{-2}$ &$ 2.79 $ &  $1.60\times 10^{-2}$ & 3.37 &$4.60 \times 10^{-2}$ &$2.22$ \\
$10 \pi$&  $12\times 12 $&  $10 \times 10$ & $1.22\times 10^{-2}$ & $0.06$ &  $5.49\times 10^{-4}$ & 4.86  &$1.43\times 10^{-3}$ & $5.00$  \\
$10 \pi$&$ 24\times 24$& $10 \times 10$ & $3.30 \times 10^{-3}$ & $1.88$& $2.50\times 10^{-4}$&  1.13&$3.18 \times 10^{-4}$ &$2.17$  \\
$10 \pi$& $ 48 \times 48 $&  $10 \times 10$& $2.66 \times 10^{-3}$ & $0.31$ & $8.87\times 10^{-5}$ & 1.49& $5.09\times 10^{-5}$ & $2.64$   \\
$10 \pi$& $ 96\times 96 $& $10 \times 10$& $5.72 \times 10^{-4}$ & $2.22$   &  $4.82\times 10^{-5}$ &0.87 & $1.23\times 10^{-5}$ &$2.04$ \\
$10 \pi$&$ 192\times 192 $&  $10 \times 10$& $1.69 \times 10^{-4}$ &$1.76$&   $1.08\times 10^{-5}$ & 2.15 &$3.00 \times 10^{-6} $& $2.03$ \\
\hline
\hline
\end{tabular}
\caption{\small Demonstration of the quadratic convergence of the FS and
  FFS-based hybrid solvers for a problem of scattering by a circular
  inclusion of diameter $d=1$ with $u^{i}(\bsx)=J_{0}(\kappa |\bsx|)$
  and with discontinuous refractive index is given by $n^{2}(\bsx)=2$
  for $\bsx \in \mathcal{D}$ and $n^{2}(\bsx)=1$ for
  $\bsx \not \in \mathcal{D}$. For these experiments the GMRES
  residual tolerance and the coupling parameter $\beta$ were set to
  $10^{-6}$ and $10^{-5}$, respectively.  The beneficial effects of
  Fourier smoothing and filtering, which lead to higher accuracies and
  a more predictable convergence behavior, can be clearly
  appreciated. \label{Table:scat_by_circularInclusion}}

\end{center}
\end{table} 

\begin{table}[H]
\begin{center}
\begin{tabular}{c|c|c|c|c|c|c|c}
 \hline
 \hline
 $\kappa$& $P\times P$ &$q\times q$ & $N/ \Gamma_{N}$  &$\epsilon_{\infty}^{N}$ &$\#$ Iter.& \multicolumn{2}{c}{Time (sec.)} \\
 \cline{7-8}  & & & & & &pre-comp& per. It.\\
 \hline
 \hline
 $50 $ & $10 \times 10$ & $10 \times 10$ & $14641/484$  & $2.02  \times 10^{-3}$ & $15$ & $0.61$ &$0.03$\\
 \hline
 $100$ & $22\times 22$ & $10 \times 10$  &   $ 58564/968$& $2.55 \times 10^{-3}$ & $37$ & $3$ &$0.09$\\
 \hline
 $200$ & $44\times 44$ & $10 \times 10$  &  $245025/1980$& $2.78\times 10^{-3}$ & $69$ & $11$ &$0.42$\\
 \hline
 $400$ & $88\times 88$ & $10 \times 10$  & $980100/3960$& $3.80\times 10^{-3}$ & $161$ & $46$ &$1.72$\\
 \hline
 $800$ & $176\times 176 $ & $10 \times 10$  &  $3920400/7920$ &  $2.81\times 10^{-3}$ & $281$ & $200$ &$7.25$\\
 \hline
 $1200$ & $264\times 264$ & $10 \times 10$  &  $8433216/11616$&$3.90\times 10^{-3}$ & $400$ & $470$ &$16.3$\\

 \hline
 \hline
\end{tabular}
\caption{\small High-frequency scattering problem. Numerical solution,
  using FFS, for a problem of scattering by a circular inclusion of
  diameter $d=1$, with $u^{i}(\mathbf{x})=J_{0}(\kappa|\mathbf{x}|)$,
  and with $n^{2}(\mathbf{x})=2$ for
  $\mathbf{x} \in \mathcal{D}$ and $n^{2}(\mathbf{x})=1$
  otherwise. For these experiments the GMRES residual tolerance and
  coupling parameter $\beta$ were set to $10^{-5}$.}
\label{Table:Scat_By_Large_Disc}
%	\label{Table:scat_by_large_smooth_scat}
\end{center} 
\end{table}

\begin{table}[H]
\begin{center}
\begin{tabular}{c|c|c|c|c|c|c|c|c} \hline
\hline			
$P\times P$ &$q\times q$ & $N/\Gamma_{N}$ & ref-index & \# $\lambda_{\mathrm{int}}$ &$\epsilon_{\infty}^{N}$ & $\#$ Iter. &  \multicolumn{2}{c}{Time (sec.)} \\ 			
\cline{8-9} & & & $n(\bsx)$  & & &  &pre-comp& per. It.\\ 
\hline

$125\times 125$ & $10 \times 10$ & $1890625/5500$ &  $50 $& $125$ & $3.43 \times 10^{-3}$ & $10$ & $84$ &$3.4$  \\
\hline
$150\times 150$ & $10 \times 10$& $2722500/6600$   &$60$  & $150$ &  $4.52 \times 10^{-3}$ & $10$ & $125$ &$5.0$ \\
\hline
$200\times 200$ & $10 \times 10$&  $4840000/8800$  & $80$  & $200$ &  $4.15\times 10^{-3}$ & $12$ & $232$ &$9.3$ \\
\hline
$250\times 250$ & $10 \times 10$ & $7562500/11000$ & $100$& $250$ & $3.73\times 10^{-3}$ & $12$ & $403$ &$14.0$ \\
\hline
$350\times 350$ & $10 \times 10$ & $14822500/15400$ & $140$& $350$ & $3.34\times 10^{-3}$ & $12$ & $848$ &$29.0$ \\
\hline
\hline
\end{tabular}  
\caption{\small Large contrast scattering problem. Numerical solution,
  using FFS, for a problem of scattering by a circular inclusion of
  diameter $d=2$ refractive index $n(\bsx)$ (resp. refractive index
  $1$) in the interior (resp. the exterior) of the inclusion, with
  $u^{i}(\mathbf{x})=J_{0}(\kappa|\mathbf{x}|)$, where $\kappa=5
  \pi$. Three digit accuracy is maintained using 11 points per
  wavelength. For these experiments the GMRES residual tolerance and
  coupling parameter $\beta$ were set to $10^{-5}$.}
\label{Table:Scat_By_Large_Contrast}
%	\label{Table:scat_by_large_smooth_scat}
\end{center} 
\end{table}

Figure~\ref{fig:-circular_inclusion} provides a graphical depiction of the scattering pattern obtained for a circular
scatterer $\mathcal{D}$ of diameter $d=2$, under incident illumination given by $u^{i}(\bsx)=\exp(i\kappa x_{1})$, with $\kappa=100$ and with $n^{2}(\bsx)=3$ for $\bsx \in \mathcal{D}$ and $n^{2}(\bsx)=1$ for
$\bsx$  outside $\mathcal{D}$---for which we have $d=55 \lambda_{\mathrm{int}}$. Using 12 points per wavelength the method achieves three digits of accuracy for this problem in the near field in a two and half minutes single-core computation, including both, precomputation and all necessary iterations.

\begin{figure}[H]
\centering
\begin{minipage}[b]{0.351\linewidth}
\includegraphics[width=\linewidth]{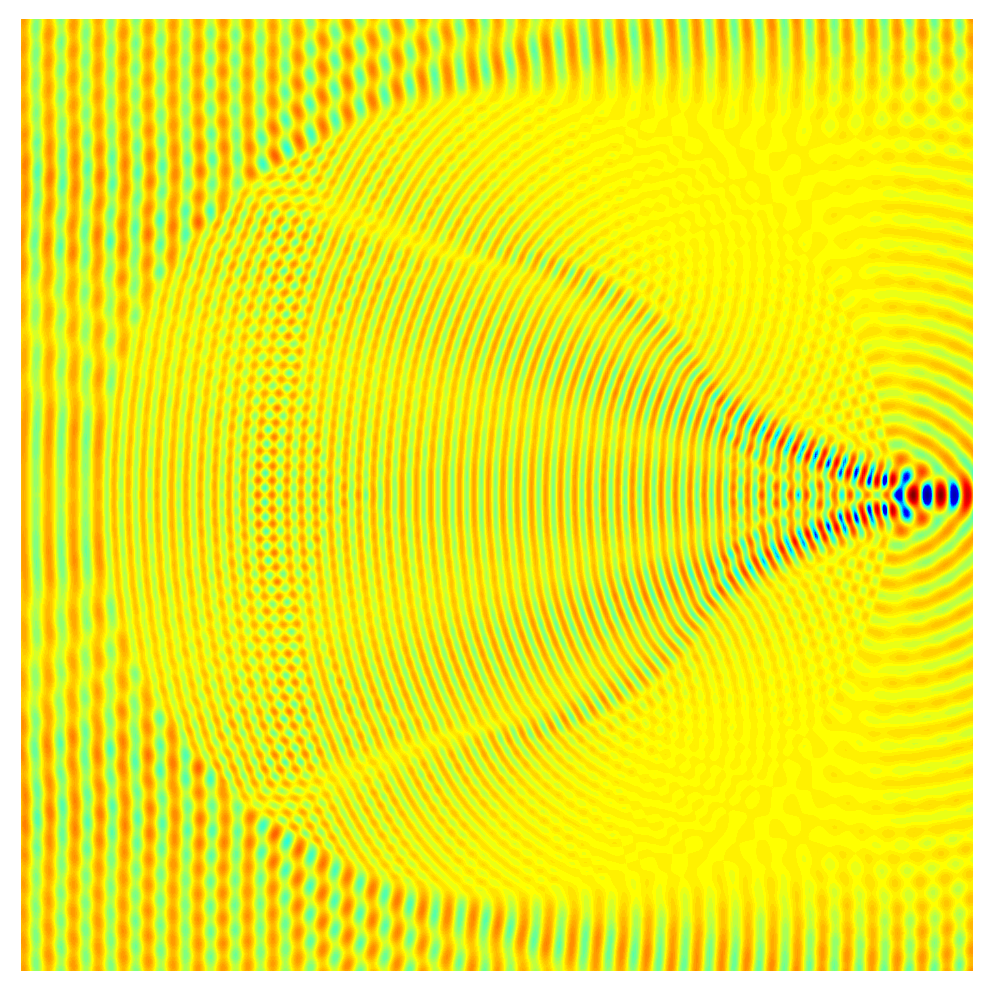}
\subcaption{Real part of the total Field $u$.}
%	\label{fig:chapter001_dist_001}
\end{minipage}
\hspace{0.5cm}
\begin{minipage}[b]{0.347\linewidth}
\includegraphics[width=\linewidth]{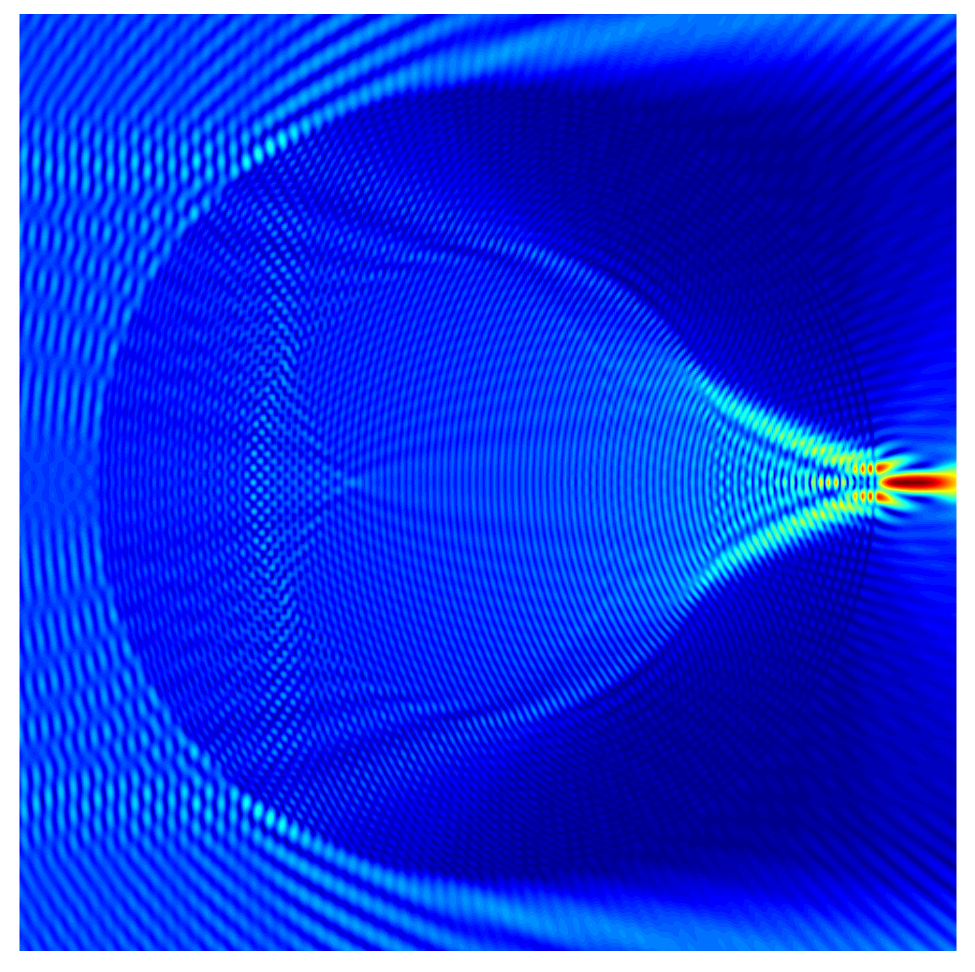}
\subcaption{Absolute value of the total field $u$.}
%	\label{fig:chapter001_reward_001}
\end{minipage}
\caption{\small Scattering of the plane wave $ \exp (i \kappa x_{1})$ with
  $\kappa=100$ by a penetrable circular inclusion $\mathcal{D}$ of
  diameter $d = 55 \lambda_{\mathrm{int}}$ with $n^{2}(\bsx)=3$ for
  $\bsx \in \mathcal{D}$ and $n^{2}(\bsx)=1$ otherwise. Using 12
  points per wavelength and relying on the FFS method, the algorithm
  produced this three-digit accurate solution in a two and half
  minutes single-core computation. }
\label{fig:-circular_inclusion}
\end{figure}

\begin{exmp}(\textit{Scattering by variable discontinuous refractivity})
\end{exmp}

Our next example demonstrates the properties of the solver, including
FFS, when applied to a scatterer containing continuously variable
material properties as well as discontinuities across a material
interface. We thus consider the problem of evaluation of the total
field $u$ that results for the refractive-index distribution
\begin{equation}\label{eq:-Gaussian}
n^{2}(\bsx)=\begin{cases}
3+2e^{-4|\bsx|^{2} } & \text{if}\hspace{1mm}\bsx\in\mathcal{D},\\
1 & \text{otherwise},
\end{cases}
\end{equation}
where $\mathcal{D}$ is circular inclusion of unit radius, under the
plane wave incidence $u^{i}(\bsx)= \exp(i\kappa x_{1})$. Since
analytical solutions are not available in this case, we use numerical
solution obtained on a finer grids for reference.  The numerical
results reported in Table~\ref{Table:scat_by_Gaussian} display errors
that in fact decrease faster than the quadratic rate expected from use
of the FFS approach.

\begin{table}[h!] 
\begin{center}
\begin{tabular}{c|c|c|c|c|c|c} 
\hline \hline
$\kappa d$ & $P\times P$ &$q\times q$&  \multicolumn{2}{c|}{Without FFS} 	 & \multicolumn{2}{c}{With FFS} \\
\cline{4-7} & & & $\epsilon_{\infty}^{N}$& Order & $\epsilon_{\infty}^{N}$ &Order \\ 
\hline
$10 \pi$& $ 3\times 3 $&$10 \times 10$& $1.15 \times 10^{0}$ &- & $1.06\times 10^{-0}$& - \\ 
$10 \pi$& $6\times 6 $&$10 \times 10$& $1.45 \times 10^{-1} $& $2.98$ & $1.18 \times 10^{-1}$ &$3.16$\\ 
$10\pi$&  $ 12\times 12 $&$10 \times 10$ &$5.48 \times 10^{-2}$& $1.40$ & $1.28\times 10^{-2}$ &$3.20$\\ 
$10 \pi$&$ 24\times 24$&$10 \times 10$  &$1.48\times 10^{-2}$ &$1.88$ & $1.53\times 10^{-3}$ & $3.06$\\ 
$10 \pi$&$48\times 48 $& $10 \times 10$ &$7.73\times 10^{-3}$& $0.93$ & $3.34\times 10^{-4}$ & $2.19$ \\ 
$10 \pi$&$ 96\times 96 $&$10 \times 10$ & $4.24\times 10^{-3}$ & $.86$ &$7.62\times 10^{-5}$ &$2.13$ \\ 
$10 \pi$& $ 192\times 192 $&$10 \times 10$ &$8.32 \times 10^{-4}$ &2.34 & $1.42\times 10^{-5}$& 2.42\\ 
\hline	
\hline
\end{tabular}  
\caption{\small Convergence Study: Convergence of the proposed algorithm for
  scattering for a discontinuous-refractivity problem as
  in~(\ref{eq:-Gaussian}).  An incident plane wave incoming from the
  positive $x$-axis was used in this case. As noted in the text, the
  errors decrease somewhat faster than the quadratic rate expected
  from use of the FFS approach.}
\label{Table:scat_by_Gaussian}
\end{center} 
\end{table}
Figure~\ref{fig:-GaussianRefractivity} displays near fields obtained
for the discontinuous refractive-index~(\ref{eq:-Gaussian}) under
wavenumbers $\kappa =50$ and $\kappa=150$, for which the diameters of
inhomogeneity are $36 \lambda_{\mathrm{min}}$ and
$108 \lambda_{\mathrm{min}}$, respectively, where
$\lambda_{\mathrm{min}}$ denotes the smallest interior wavelength.  In
both the cases, the algorithm achieved three-digit accuracy in the
near field by using 11 points per $\lambda_{\mathrm{min}}$ in
single-core computations requiring one and thirteen minutes,
respectively.

\begin{figure}[H] 
\begin{minipage}[b]{0.33\linewidth}
\centering
\includegraphics[width=\linewidth]{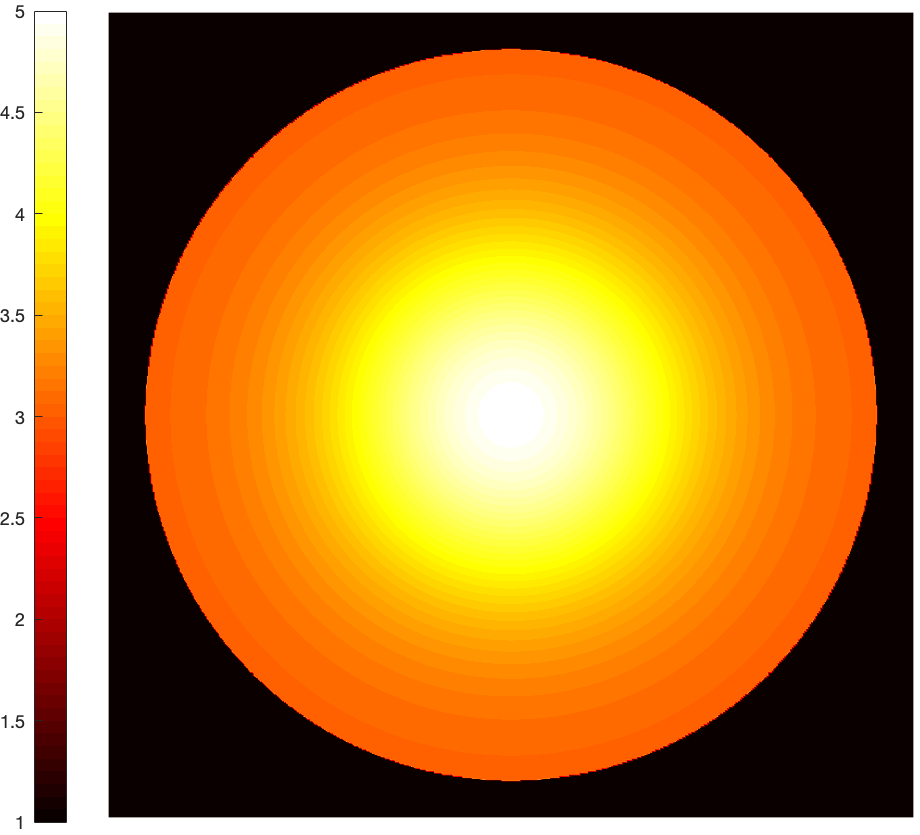}
\subcaption{Gaussian Refractivity.}
%	\label{fig:chapter001_dist_001}
\end{minipage}
\hspace{0.3cm}
\begin{minipage}[b]{0.3\linewidth}
\centering
\includegraphics[width=\linewidth]{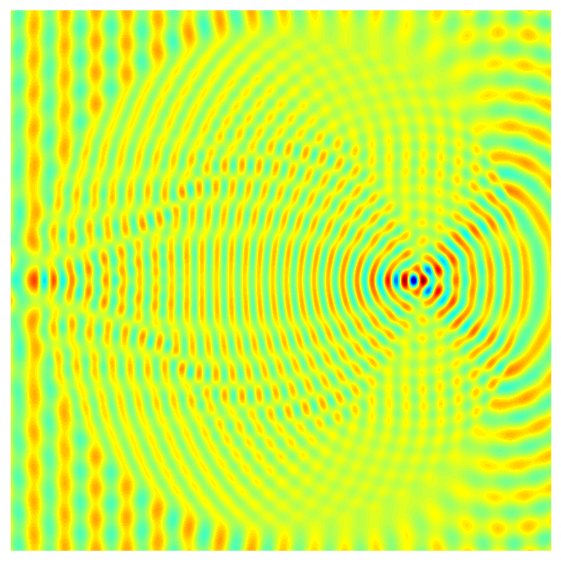}
\subcaption{$\kappa=50$, Real part of ${u}$}
%	\label{fig:chapter001_reward_001}
\end{minipage}
\hspace{0.3cm}
\begin{minipage}[b]{0.3\linewidth}
\centering
\includegraphics[width=\linewidth]{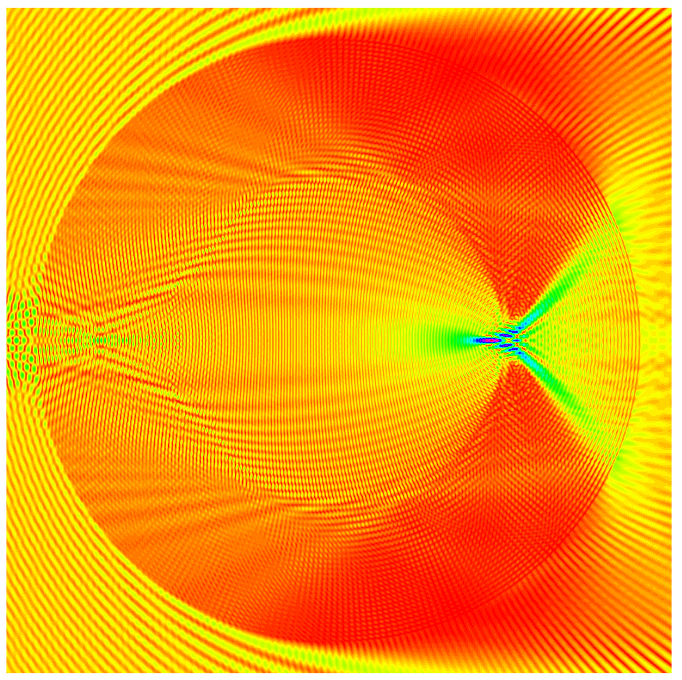}
\subcaption{$\kappa=150,$ $|u|$. }
%	\label{fig:chapter001_reward_001}
\end{minipage}

\caption{\small Scattering of a plane wave $ \exp (i \kappa x_{1})$ by a the
  Gaussian refractivity profile~(\ref{eq:-Gaussian}) for $\kappa=50$
  and $\kappa=150$. In both cases, using the FFS method and 11
  points per wavelength the algorithm produced three-digit accuracy in
  a one and thirteen-minute computation respectively. }
\label{fig:-GaussianRefractivity}
\end{figure}

\begin{exmp}(\textit{Scattering by geometries containing corners and
cusps})
\end{exmp} 
None of the algorithmic components, nor the resulting accuracies in
the proposed method, are constrained in any way by the geometry of the
scatterer.  Without any additional effort, the approach can easily
deal with arbitrarily complicated geometries. To demonstrate this, we
consider two additional geometries, containing corner- and
cusp-singularities, respectively. Once again the accuracy of any one
solution is evaluated by comparison with results obtained on finer
grids. In both cases we compute the near field solution $u$ under the
plane wave incidence $u^{i}(\bsx)= \exp(i\kappa x_{1})$.

\begin{table}[H]
\begin{minipage}[t]{0.49\linewidth}\centering
\centering
\begin{tabular}{c|c|c|c|c} \hline
\hline
$\kappa d$ & $P\times P$& $q\times q$ &$\epsilon_{\infty}^{N}$ & Order\\ 		
\hline
\hline
$12 \pi$&  $ 3\times 3$& $10 \times 10$ & $ 1.33\times 10^{0}$ & - \\ 
$12 \pi$ & $6\times 6 $& $10 \times 10$ & $6.95\times 10^{-2}$ & $4.26$\\ 
$12 \pi$ &  $ 12\times 12 $& $10 \times 10$  &$9.75\times 10^{-3}$ & $2.83$ \\ 
$12 \pi$ &   $24\times 24$& $10 \times 10$  &$ 2.12 \times 10^{-3}$ &$2.20$\\ 
$12 \pi$ &  $ 48\times 48 $&$10 \times 10$  &$4.66\times 10^{-4}$ &$2.18$ \\ 
$12 \pi$ &  $ 96\times 96 $&$10 \times 10$  &$9.22\times 10^{-5}$ &$2.34$ \\ 
$12 \pi$ &  $ 192\times 192 $&$10 \times 10$ &$1.92\times 10^{-5}$ &$2.26$ \\ 
\hline	
\hline
\end{tabular}  
\caption{\small Convergence Study: Illustration of quadratic convergence of the proposed algorithm for a geometry containing a corner singularity.}
\label{Table:scat_by_Square}
\end{minipage}\hfill
\begin{minipage}[t]{0.49\linewidth}\centering
\centering
\begin{tabular}{c|c|c|c|c} \hline
\hline
$\kappa d$ & $P\times P$& $q\times q$  &$\epsilon_{\infty}^{N}$ & Order\\ 		
\hline
\hline
$12 \pi$&  $ 3\times 3$& $10 \times 10$ & $ 1.42\times 10^{0}$ & - \\ 
$12 \pi$ & $6\times 6 $& $10 \times 10$ & $9.86\times 10^{-2}$ & $3.84$\\ 
$12 \pi$ &  $ 12\times 12 $& $10 \times 10$    &$1.21\times 10^{-2}$ & $3.02$ \\ 
$12 \pi$ &   $24\times 24$& $10 \times 10$  &$ 2.26 \times 10^{-3}$ &$2.42$\\ 
$12 \pi$ &  $ 48\times 48 $&$10 \times 10$   &$6.23\times 10^{-4}$ &$1.85$ \\ 
$12 \pi$ &  $ 96\times 96 $&$10 \times 10$   &$1.46\times 10^{-4}$ &$2.09$ \\ 
$12 \pi$ &  $ 192\times 192 $&$10 \times 10$  &$2.15\times 10^{-5}$ &$2.76$ \\ 
\hline	
\hline
\end{tabular}  
\caption{\small Convergence Study: Illustration of quadratic convergence of the proposed algorithm for a geometry containing a cusp singularity.}
\label{Table:scat_by_Cusp}
\end{minipage}\hfill
\end{table}

Table~\ref{Table:scat_by_Square} presents numerical results for the scatterer $\mathcal{D}$ depicted in
Figure~\ref{fig:-scat_by_square}(a), with $n^{2}(\bsx)=2$ for $\bsx\in\mathcal{D}$ and one otherwise.  The computed near field for $\kappa=200$, which was determined to be accurate up to three digits, is
displayed in Figure~\ref{fig:-scat_by_square}(b). Table~\ref{Table:scat_by_Cusp}, in turn, presents numerical results for the scatterer $\mathcal{D}$ depicted in Figure \ref{fig:-scat_by_star}(a), which equals the region
contained between the four unit discs centered at $(1,1)$, $(1,-1)$, $(-1,1)$ and $(-1,-1)$, with $\kappa d=12\pi$, and with $n^{2}(\bsx)=2$ for $\bsx \in \mathcal{D}$ and $n^{2}(\bsx)=1$ otherwise.  Figure \ref{fig:-scat_by_star}(b), finally, displays the near field for this geometry, but with $\kappa=20\pi$ and $n^{2}(\bsx)=16$ for $\bsx \in \mathcal{D}$ and one otherwise---thus yielding a scatterer $80\lambda_\mathrm{int}$ in size. A two-digit
solution was obtained using merely nine points per wavelength and computing time of seven minutes.

\begin{figure}[h!]
\begin{center}
\begin{minipage}[b]{0.445\linewidth}
\includegraphics[width=\linewidth]{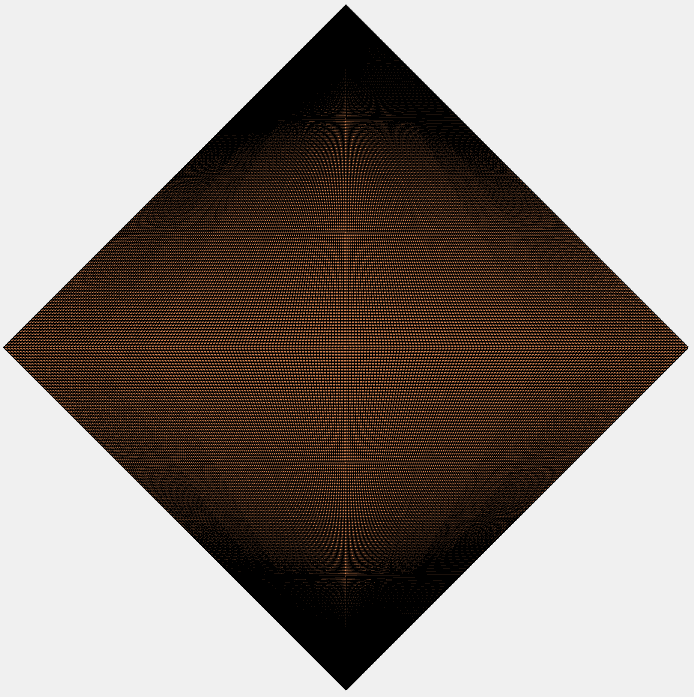}
\subcaption{Square scatterer}
%	\label{fig:chapter001_dist_001}
\end{minipage}
\hspace{0.6cm}
\begin{minipage}[b]{0.45\linewidth}
\includegraphics[width=\linewidth]{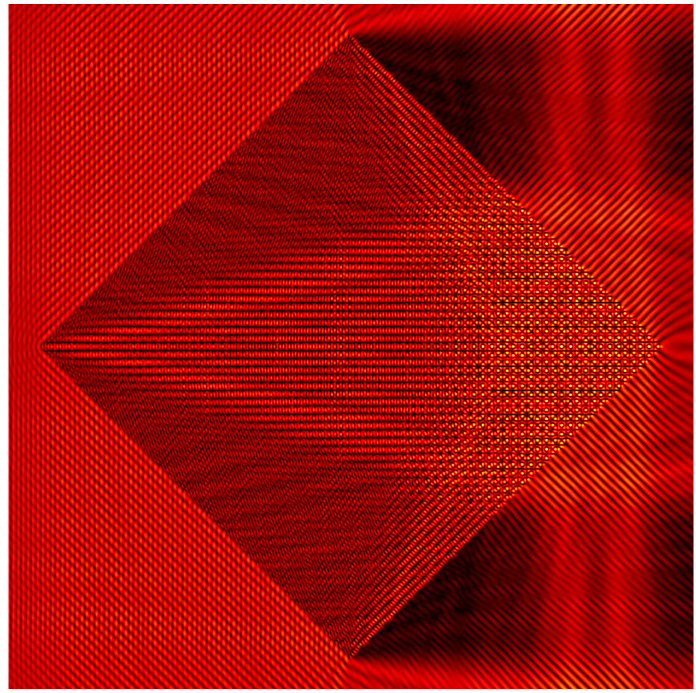}
\subcaption{Absolute value of the total field $u$.}
%	\label{fig:chapter001_reward_001}
\end{minipage}
\end{center}
\caption{\small Scattering by a geometry containing corner singularities, with $n^{2}(\bsx)=3$ for $\bsx \in \mathcal{D}$ and one otherwise. For this experiment the incident field $u^{i}(\bsx)= \exp (i \kappa x_{1})$ with $\kappa =200$ was
used. Errors of the order of $10^{-3}$ were obtained in the near field solution and the total computing time is fourteen
minutes.}
\label{fig:-scat_by_square}
\end{figure}

\begin{figure}[h!]
  \begin{center}
    \begin{minipage}[b]{0.445\linewidth}
\includegraphics[width=\linewidth]{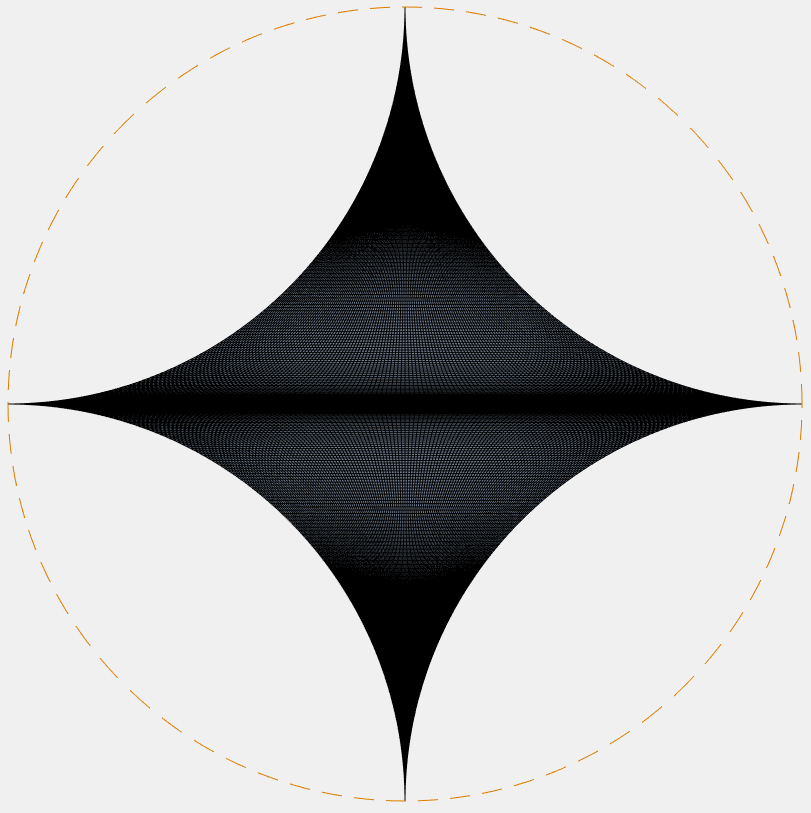}
\subcaption{Star-shaped geometry with cusp}
%	\label{fig:chapter001_dist_001}
\end{minipage}
\hspace{0.5cm}
\begin{minipage}[b]{0.46\linewidth}
\includegraphics[width=\linewidth]{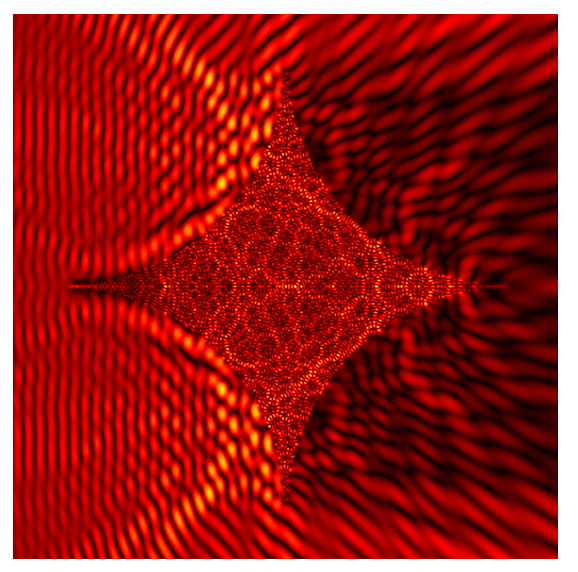}
\subcaption{Absolute value of total field $u$}
%	\label{fig:chapter001_reward_001}
\end{minipage}
\end{center}
\caption{\small Scattering of the incident field
$u^{i}(\bsx)= \exp (i \kappa x_{1})$, with $\kappa =20\pi$, by a geometry containing cusp singularities, with
$n^{2}(\bsx)=16$ for $\bsx \in \mathcal{D}$ and $n^{2}(\bsx)=1$ otherwise. Errors of the order of $10^{-2}$
were obtained in the near field solution on the basis of nine points per interior wavelength. }
\label{fig:-scat_by_star}
\end{figure}

\section{Conclusions}
\label{conclusion}
This paper introduced a new methodology for solutions of
two-dimensional problems of scattering by penetrable inhomogeneous
media with possibly discontinuous refractivity. The solver achieves
high-order convergence for smooth refractivities at nearly-linear
computing cost, and, to the best the of our knowledge, it is the first
hybrid direct/iterative solver which yields second
order convergence for discontinuous refractivities, and for low- or
high-frequencies alike.  The method additionally enjoys very low
dispersion for either smooth or discontinuous refractive indexes, and
it can natively and easily handle scatterers with complicated
geometric singularities, including e.g. as corners and cusps.
Extensions of the proposed approach to electromagnetic and elastic
wave scattering problems, as well as three-dimensional configurations
are envisioned.

\section*{Acknowledgments} 
This work was supported by NSF, DARPA and AFOSR through contracts
DMS-2109831 and HR00111720035, and FA9550-21-1-0373, and by the NSSEFF
Vannevar Bush Fellowship under contract number N00014-16-1-2808.

\appendix
\section{Appendix: Fast and accurate computation of Fourier coefficients of discontinuous functions\label{append}} 
In order to enable fast and accurate evaluation of the Fourier coefficients of a given, possibly discontinuous, function $f$ in the interval $[0,2\pi]$, as needed in Section~\ref{FS} (see Remark~\ref{FC-disc}), we rely on the Fourier
continuation (FC) approach~\cite{bruno-lyon2010,amlani2016fc}. For our description we assume that the function $f$ has only one discontinuity, say, at $x=a\in(0,2\pi)$, but an arbitrary number of discontinuities may be treated in similar fashion. 

Let now $f^{c}_{j}$ ($j=1,2$) denote $d_{j}$-periodic Fourier continuation functions of the restrictions of the function $f$ to the intervals $[0,a]$ and $[a,2\pi]$, respectively.  We thus have 
\begin{equation}\label{FC2}
f^{c}_{j}(x)=\sum_{k=-F}^{F} c_{k}^{j} e^{\frac{ 2\pi i kx}{d_{j}} },
\end{equation}
where, following e.g.~\cite{amlani2016fc}, the Fourier coefficients $ c_{k}^{j}$ are obtained in $O(F \log F)$ operations by means of the FC procedure and associated FFTs, and where the resulting functions
$f^{c}_{j}$ with $j=1,2$ approximate the restrictions of the function $f$ to the intervals $[0,a]$ and $[a,2\pi]$, respectively, with high-order accuracy. Let 
\begin{equation}\label{fc_1}
f_{\ell} =\frac{1}{2\pi}\int_{0}^{2\pi}f(t)e^{- i \ell t} dt\\ 
=\frac{1}{2\pi}\int_{0}^{a}f(t) e^{- i \ell t} dt+\frac{1}{2\pi}\int_{a}^{2\pi}f(t)e^{-i \ell t} dt
\end{equation}
denote the desired Fourier coefficient of $f$ in the interval $[0,2\pi]$. The two integrals on the right-hand side of~\eqref{fc_1} can be computed with high accuracy by substituting $f$ by $f^{c}_{j}$ and exchanging integration and summation. In the case of the first integral, for example, we have
\begin{equation}\label{fc_3}
\int_{0}^{a}f(t) e^{- i \ell t} dt \approx\int_{0}^{a}f^{c}_{1}(t) e^{-i \ell t} dt\\
=\sum_{k=-F}^{F} c^{1}_{k}\int_{0}^{a}e^{\frac{2\pi k-\ell d_{1}}{d_{1}} i t} dt\\ 
=\sum_{k=-F}^{F} c^{1}_{k}b_{2\pi k-\ell d_{1}},
\end{equation}
where
\[
b_{2\pi k-\ell d_{1}} =
\begin{cases}
\frac{d_{1}}{i (2\pi k- \ell d_{1})}\left(e^{\frac{2\pi k-\ell d_1}{d_{1}} i a}-1 \right) & \mbox{if} \ (2\pi k-\ell d_1)\neq 0,\\
a& \mbox{otherwise}.
\end{cases}
\]
The summation in~\eqref{fc_3} is a discrete scaled convolution and can be obtained for all $\ell$ in $O(F\log F)$ operations by using FFT~\cite{nascov2009fast}. Thus highly-accurate values of the Fourier coefficients $f_{\ell}$ of the discontinuous function $f$, for all $\ell$, $-F\leq \ell \leq F$, can be produced in $O(F \log F)$ operations.

\bibliographystyle{abbrv} 
\bibliography{HybridSolver_Reference}

\end{document}